\def\pmod #1{\ ({\rm{mod}}\ #1)}
\def\Z{\Bbb Z}
\def\N{\Bbb N}
\def\bg{\bigg}
\def\({\bg(}
\def\){\bg)}
\def\sgn{{\rm sgn}}
\theoremstyle{plain}
\newtheorem{theorem}{Theorem}
\newtheorem{lemma}{Lemma}
\newtheorem{corollary}{Corollary}
\theoremstyle{definition}
\theoremstyle{remark}
\def\cG{{\mathcal G}}
\def\cK{{\mathcal K}}
\begin{document}

\hbox{}
\medskip

\title
[{}]
{On the supercongruences involving harmonic numbers of order $2$}

\author{Guo-Shuai Mao}
\address {School of Mathematics and Statistics, Nanjing University of Information Science and Technology, Nanjing 210044, People's Republic of China}
\email{maogsmath@163.com}

\author{Hao Pan}
\address{School of Applied Mathematics, Nanjing University of Finance and Economics, Nanjing 210023, People's Republic of China}
\email{haopan79@zoho.com}

\keywords{Congruences; Central binomial coefficients; Harmonic numbers; Bernoulli numbers.
\newline \indent 2010 {\it Mathematics Subject Classification}. Primary 11A07; Secondary 05A10, 11B65, 11B68.
\newline \indent The first author was supported by the Natural Science Foundation (Grant No. 12001288) of China and the second author was funded by the Natural Science Foundation (Grant No. 12071208) of China.}
\begin{abstract}
We prove several supercongruences involving the harmonic number of order two $H_n^{(2)}:=\sum_{k=1}^n1/k^2$. For example, if $p>5$ is prime and $\alpha$ is $p$-integral, then we can completely determine 
$$
\sum_{k=0}^{p-1}\frac{H_k^{(2)}}{k}\cdot\binom{\alpha}{k}\binom{-1-\alpha}{k}\quad\text{and}\quad
\sum_{k=0}^{\frac{p-1}{2}}\frac{H_k^{(2)}}{k}\cdot\binom{\alpha}{k}\binom{-1-\alpha}{k}
$$
modulo $p^3$. In particular, by setting $\alpha=-1/2$, we confirm two conjectured congruences of Z.-W. Sun \cite{S11a}.
\end{abstract}
\maketitle
\section{Introduction}
\setcounter{lemma}{0}
\setcounter{theorem}{0}
\setcounter{corollary}{0}
\setcounter{remark}{0}
\setcounter{equation}{0}
\setcounter{conjecture}{0}
For a positive integer $r$, define the $n$-th harmonic number of order $r$
$$
H_n^{(r)}:=\sum_{k=1}^n\frac{1}{k^{r}}.
$$
In particular, set $H_0^{(r)}=0$.
When $r=1$, $H_n:=H_n^{(1)}$ is the $n$-th harmonic number.
The harmonic numbers of high order have some interesting arithmetical properties. For example, 
for any prime $p>r+2$,
we have \cite{H} 
\begin{align}\label{Hp1r}
H_{p-1}^{(r)}\equiv\begin{cases}-\frac{r(r+1)}{2(r+2)}p^2B_{p-r-2}\pmod{p^3},&\text{if $r$ is odd},\\
\frac{r}{r+1}p B_{p-r-1}\pmod{p^2},&\text{if $r$ is even},\end{cases}
\end{align}
where the Bernoulli number $B_n$ is given by
$$
\sum_{n=0}^\infty\frac{B_n}{n!}t^n=\frac{t}{e^t-1}.
$$
Similarly, it is known \cite{s2000} that
\begin{align}\label{Hp12r}
H_{(p-1)/2}^{(r)}\equiv\begin{cases}-2q_p(2)\pmod{p},&\text{if $r=1$},\\
-\frac{2^r-2}{r}B_{p-r} \pmod{p},&\text{if $r>1$ is odd},\\
\frac{r(2^{r+1}-1)}{2(r+1)}pB_{p-r-1} \pmod{p^2},&\text{if $r$ is even},\end{cases}
\end{align}
for any prime $p>r+2$, where $q_p(a)=(a^{p-1}-1)/p$ stands for the Fermat quotient.

In \cite{S11a}, motivated by the convergent series concerning $\pi^3$, Z.-W. Sun proposed many curious conjectural congruences involving $H_k$ and $H_{k}^{(2)}$. Two of those conjectures are \cite[Conjecture 5.3]{S11a}
\begin{equation}\label{1.3a}
\sum_{k=1}^{p-1}\frac{H_{k}^{(2)}}{k16^k}\cdot \binom{2k}k^2\equiv-12\frac{H_{p-1}}{p^2}+\frac{7}{10}p^2B_{p-5}\pmod{p^3},
\end{equation}
and
\begin{equation}\label{1.3}
\sum_{k=\frac{p+1}2}^{p-1}\frac{H_{k}^{(2)}}{k16^k}\cdot\binom{2k}k^2\equiv\frac{31}2p^2B_{p-5}\pmod{p^3},
\end{equation}
where $p>3$ is prime.
In this paper, we shall confirm Sun's conjectures \eqref{1.3a} and \eqref{1.3}.
\begin{theorem}\label{SunCT}
\eqref{1.3a} and \eqref{1.3} are true.
\end{theorem}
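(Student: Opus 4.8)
The plan is to deduce Theorem~\ref{SunCT} from two general congruences, valid for every $p$-integral $\alpha$, that evaluate
\[
\sum_{k=1}^{p-1}\frac{H_k^{(2)}}{k}\binom{\alpha}{k}\binom{-1-\alpha}{k}\quad\text{and}\quad
\sum_{k=1}^{(p-1)/2}\frac{H_k^{(2)}}{k}\binom{\alpha}{k}\binom{-1-\alpha}{k}
\]
modulo $p^3$. The link to \eqref{1.3a}--\eqref{1.3} is the elementary identity $\binom{-1/2}{k}=\binom{2k}{k}/(-4)^k$, which at $\alpha=-1/2$ (where also $-1-\alpha=-1/2$) gives
\[
\binom{\alpha}{k}\binom{-1-\alpha}{k}\Big|_{\alpha=-1/2}=\binom{-1/2}{k}^2=\frac{\binom{2k}{k}^2}{16^k}.
\]
Hence \eqref{1.3a} is exactly the full-range congruence at $\alpha=-1/2$, and since $H_0^{(2)}=0$ the sum may start at $k=1$; the tail in \eqref{1.3} is then the difference of the full-range and half-range congruences at $\alpha=-1/2$. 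So it suffices to establish the two general formulas and match the Bernoulli-number expressions on the right.

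To treat the sums I would first remove the weight $H_k^{(2)}$ through an exact finite identity in the parameters. Writing $c_k(\alpha):=\binom{\alpha}{k}\binom{-1-\alpha}{k}=(-\alpha)_k(1+\alpha)_k/(k!)^2$, I note that $c_k$ is invariant under $\alpha\mapsto-1-\alpha$ with fixed point $-1/2$; this symmetry kills the odd-order $\alpha$-derivatives of $c_k$ at $-1/2$ and is precisely what forces \emph{even}-order harmonic sums, and hence Bernoulli numbers, into the final answer. Guided by this, I would use creative telescoping (Zeilberger/WZ) in the pair $(n,\alpha)$ to express $\sum_{k=1}^{n}H_k^{(2)}c_k(\alpha)/k$ as a $\Q(\alpha)$-linear combination of lower-complexity sums, such as $\sum_{k=1}^{n}c_k(\alpha)/k$ and $\sum_{k=1}^{n}c_k(\alpha)/k^3$, plus a closed boundary term proportional to $c_n(\alpha)$ times a rational function of $n$ and $\alpha$. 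Specializing $n=(p-1)/2$ and $n=p-1$, the boundary terms are governed by the $p$-adic size of $\binom{2k}{k}$, while the weightless sums are reduced modulo $p^3$ by standard congruences for $\binom{2k}{k}$, for the power sums $\sum 1/k^j$, and for Fermat quotients, together with \eqref{Hp1r} and \eqref{Hp12r}, which turn $H_{p-1}^{(3)}$ and $H_{(p-1)/2}^{(4)}$ into $B_{p-5}$ and $H_{p-1}^{(1)}$ into $B_{p-3}$ (equivalently into $H_{p-1}/p^2$).

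For the upper range $(p+1)/2\le k\le p-1$ a further simplification makes the half-to-full passage transparent: since $-1/2\equiv(p-1)/2\pmod p$ we have $\binom{-1/2}{k}\equiv\binom{(p-1)/2}{k}\equiv0\pmod p$, so $v_p\!\big(\binom{2k}{k}\big)=1$ and every summand in \eqref{1.3} is divisible by $p^2$. Thus modulo $p^3$ only the leading $p$-adic digit of $\binom{2k}{k}^2/16^k$ contributes, and one may compute each tail term modulo $p$ after extracting the factor $p^2$, which rewrites the tail in terms of a head-range sum. Either way \eqref{1.3} emerges as the difference of the full-range and half-range evaluations, with no new ingredient beyond the previous step.

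The main obstacle is the mod-$p^3$ accounting rather than any single identity. Because $B_{p-5}$ is a comparatively deep Bernoulli number, the weightless sums must be expanded to third order in $p$, which means carrying several auxiliary congruences (for $\sum 1/k^2$, $\sum 1/k^3$, $\sum H_k/k^2$ and their products) at once and verifying that the numerous $O(p)$ and $O(p^2)$ contributions cancel to leave exactly $\tfrac{7}{10}p^2B_{p-5}$ and $\tfrac{31}{2}p^2B_{p-5}$. Pinning down the exact telescoping identity in $(n,\alpha)$, and showing its boundary term is $p$-adically negligible or explicitly computable at $n=(p-1)/2$ and $n=p-1$, is the technical heart; once it is in place, the specialization $\alpha=-1/2$ and the matching of constants via \eqref{Hp1r}--\eqref{Hp12r} become routine.
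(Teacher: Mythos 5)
Your overall architecture coincides with the paper's: strip the weight $H_k^{(2)}$ by an exact identity with a closed boundary term, evaluate the resulting general-$\alpha$ sums modulo $p^3$, specialize at $\alpha=-1/2$ via $\binom{-1/2}{k}^2=\binom{2k}{k}^2/16^k$, and obtain \eqref{1.3} as the difference of the full-range and half-range evaluations. The paper's version of your hoped-for identity is Lemma \ref{Lem2.2},
$$
\sum_{k=1}^n\binom{x}{k}\binom{-x}{k}H_k^{(2)}=-\frac{1}{x^2}+\binom{x-1}{n}\binom{-x-1}{n}\left(\frac{1}{x^2}+H_n^{(2)}\right),
$$
but it is deployed by telescoping in $\alpha$ rather than in $n$: the difference $S_n(\alpha)-S_n(\alpha-1)$ equals $\frac{2}{\alpha}\sum_k\binom{\alpha}{k}\binom{-\alpha}{k}H_k^{(2)}$, and iterating $a=\langle\alpha\rangle_p$ times reduces everything to $\alpha=pt$, where $\binom{pt}{k}\binom{-1-pt}{k}\equiv-\frac{pt}{k}(1+\frac{pt}{k})\pmod{p^3}$ turns the sum into multiple harmonic sums (Lemmas \ref{Lem2.3}--\ref{Lem2.4}). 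You never actually produce your telescoping identity, only posit that Zeilberger/WZ will find one; that part is incomplete but in principle recoverable.

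The genuine gap is your final claim that, once the reduction is done, ``the matching of constants via \eqref{Hp1r}--\eqref{Hp12r} becomes routine.'' It is not, and the tools you cite cannot close it. After specializing \eqref{1.1} at $\alpha=-1/2$ (so $a=(p-1)/2$, $t=-1/2$), the surviving term is $-2H_{(p-1)/2}^{(3)}$, which must be evaluated modulo $p^3$; but \eqref{Hp12r} gives $H_{(p-1)/2}^{(3)}$ only modulo $p$ (as $-2B_{p-3}$). Moreover, the target \eqref{1.3a} contains $12H_{p-1}/p^2$ in a congruence modulo $p^3$, i.e.\ it encodes $H_{p-1}$ modulo $p^5$ --- information far beyond Wolstenholme or \eqref{Hp1r}, and not ``equivalently $B_{p-3}$'' as your sketch suggests. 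The paper needs the dedicated Lemma \ref{LemH}, $H_{(p-1)/2}^{(3)}\equiv 6H_{p-1}/p^2-\frac{81}{10}p^2B_{p-5}\pmod{p^3}$, whose proof rests on Z.-H. Sun's Kummer-type congruences for $B_{\varphi(p^3)-2}/(\varphi(p^3)-2)$ and Tauraso's expansion of $H_{p-1}/p^2$ modulo $p^3$ in terms of $B_{3p-5}$, $B_{2p-4}$, $B_{p-3}$, $B_{p-5}$; without this (or an equivalent) your plan cannot produce the right-hand side $-12H_{p-1}/p^2+\frac{7}{10}p^2B_{p-5}$. A secondary omission: the half-range answer involves $\sum_{k=1}^{(p-1)/2}(2H_{2k}-H_k)/k^4\equiv\frac{31}{2}B_{p-5}\pmod p$ (Lemma \ref{Lem2.5}), which requires congruences for \emph{alternating} multiple harmonic sums such as $H_{p-1}(1,-4)$ and $H_{p-1}(-5)$ from Tauraso--Zhao, not on your ``standard'' list; this one is mod $p$ only and hence nearer to routine, but it too must be named for the constants $\frac{7}{10}$ and $\frac{31}{2}$ to come out.
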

For a prime $p$, let $\Z_p$ denote the ring of all $p$-adic integers.
For any $x\in\Z_p$, let $\langle x\rangle_p$ denote the least non-negative residue of $x$ modulo $p$, i.e.,
$\langle x\rangle_p\in\{0,1,\ldots,p-1\}$ and $x\equiv\langle x\rangle_p\pmod{p}$. 
In \cite{SZH14}, Z.-H. Sun proved that
\begin{equation}\label{ZHSun}
\sum_{k=0}^{p-1}\binom{\alpha}{k}\binom{-1-\alpha}{k}\equiv(-1)^{\langle\alpha\rangle_p}\pmod{p^2}
\end{equation}
for any odd prime $p$ and $\alpha\in\Z_p$. In particular, since $\binom{2k}{k}=(-4)^k\binom{-\frac12}{k}$, substituting $\alpha=-1/2$ in \eqref{ZHSun}, we get
$$
\sum_{k=0}^{p-1}\frac1{16^k}\cdot\binom{2k}{k}^2\equiv(-1)^{\frac{p-1}{2}}\pmod{p^2},
$$
which was conjectured by Rodriguez-Villegas \cite{RV03} and confirmed by Mortenson \cite{Mo04}. 
For the related results, the reader may refer to \cite{SZH16,SZH20}.

In \cite{Su14}, Z.-W. Sun completely determined
$$
\sum_{k=0}^{p-1}\binom{\alpha}{k}\binom{-1-\alpha}{k}H_k\quad\text{and}\quad\sum_{k=0}^{p-1}\binom{\alpha}{k}\binom{-1-\alpha}{k}H_k^{(2)}
$$
modulo $p^2$. For example, Sun \cite[(1.14)]{Su14} showed that
\begin{equation}
\sum_{k=0}^{p-1}\binom{\alpha}{k}\binom{-1-\alpha}{k}H_k^{(2)}\equiv-E_{p^2-p-2}(-\alpha)\pmod{p^2},
\end{equation}
where the Euler polynomial $E_n(x)$ is given by
$$
\sum_{n=0}^{\infty}\frac{E_n(x)}{n!}t^n=\frac{2e^{xt}}{e^{2t}+1}.
$$

Motivated by the results, we shall determined
$$
\sum_{k=1}^{p-1}\frac{H_k^{(2)}}{k}\cdot\binom{\alpha}k\binom{-1-\alpha}k\quad\text{and}\quad
\sum_{k=1}^{\frac{p-1}2}\frac{H_k^{(2)}}{\alpha+k}\cdot\binom{\alpha}k\binom{-1-\alpha}k
$$
modulo $p^3$, and give the following extension of Theorem \ref{SunCT}.
\begin{theorem}\label{Th1.1} Suppose that $p>5$ is a prime and $\alpha\in\mathbb{Z}_p$. Let $a=\langle \alpha\rangle_p$ and $t:=(\alpha-a)/p$. Then
\begin{align}\label{1.1}
\sum_{k=1}^{p-1}\frac{H_k^{(2)}}{k}\cdot\binom{\alpha}k\binom{-1-\alpha}k\equiv 2p^2t^2B_{p-5}-\frac{2}5p^2tB_{p-5}+\cG(a,t)\pmod{p^3},
\end{align}
where
$$
\cG(a,t):=-2H_a^{(3)}+6ptH_a^{(4)}+2p^2t(1-5t)H_{a}^{(5)}.
$$
Moreover, if $a\leq (p-1)/2$, then
\begin{align}\label{1.2}
\sum_{k=1}^{\frac{p-1}2}\frac{H_k^{(2)}}{k}\cdot\binom{\alpha}k\binom{-1-\alpha}k
\equiv4p^2t^2B_{p-5}-\frac{31}5p^2tB_{p-5}+\cK(a,t)\pmod{p^3},
\end{align}
where
$$
\cK(a,t):=-2H_a^{(3)}+8ptH_a^{(4)}-20p^2t^2H_a^{(5)}+2p^2t\sum_{k=1}^{a}\frac{2H_{2k}-H_k}{k^4}.
$$
\end{theorem}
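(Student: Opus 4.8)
The plan is to regard the left-hand sides as values of polynomials and to Taylor-expand them $p$-adically. Write $\alpha=a+pt$ and set
$$
F(\alpha):=\sum_{k=1}^{p-1}\frac{H_k^{(2)}}{k}\binom{\alpha}{k}\binom{-1-\alpha}{k},
$$
a polynomial in $\alpha$ whose coefficients lie in $\Z_{(p)}$, since every denominator is of the form $(k!)^2$ or $k\,H_k^{(2)}$ with $k\le p-1$, hence prime to $p$. Consequently each Taylor coefficient $F^{(m)}(a)/m!$ is $p$-integral, and as $(pt)^3\equiv0\pmod{p^3}$ we get
$$
F(\alpha)\equiv F(a)+pt\,F'(a)+\tfrac12(pt)^2F''(a)\pmod{p^3}.
$$
Thus it suffices to compute $F(a)\bmod p^3$, $F'(a)\bmod p^2$, and $F''(a)\bmod p$, and to run the same scheme for the half-range sum of \eqref{1.2}. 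Matching against the target, the three powers of $t$ force $F(a)=-2H_a^{(3)}$, $F'(a)\equiv6H_a^{(4)}+p(2H_a^{(5)}-\tfrac25B_{p-5})$, and $F''(a)\equiv4B_{p-5}-20H_a^{(5)}$, which is the bookkeeping I would aim to verify.

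First I would evaluate $F(a)$ exactly. Since $0\le a\le p-1$ is an integer, $\binom{a}{k}=0$ for $k>a$, so the sum truncates and
$$
F(a)=\sum_{k=1}^{a}\frac{H_k^{(2)}}{k}\binom{a}{k}\binom{-1-a}{k}=-2H_a^{(3)},
$$
the last equality being a finite hypergeometric identity I would prove by induction on $a$ (or by creative telescoping). This already produces the common $t^0$-term $-2H_a^{(3)}$ of both $\cG$ and $\cK$.

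The substance lies in $F'(a)$ and $F''(a)$, which I would obtain by differentiating term-by-term and splitting the range into $1\le k\le a$ and $a<k\le p-1$. Using the reformulation
$$
\binom{\alpha}{k}\binom{-1-\alpha}{k}=-\frac{\alpha(\alpha+k)}{k^2}\prod_{j=1}^{k-1}\Big(1-\frac{\alpha^2}{j^2}\Big),
$$
one sees that for $a<k$ the factor $j=a$ makes $T_k(\alpha):=\binom{\alpha}{k}\binom{-1-\alpha}{k}$ vanish at $\alpha=a$, and a direct computation gives the tail closed form
$$
T_k'(a)=(-1)^{a+1}\frac{(k+1)(k+2)\cdots(k+a)}{(k-a)(k-a+1)\cdots k},
$$
with an analogous expression for $T_k''(a)$. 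For $1\le k\le a$ the derivatives collapse, through finite identities of the same type as above, into $H_a^{(4)}$ and $H_a^{(5)}$ (yielding $6pt\,H_a^{(4)}$, $-20p^2t^2H_a^{(5)}$, and so on). For the tail $a<k\le p-1$ I would reduce $\sum_k(H_k^{(2)}/k)T_k'(a)$ modulo $p^2$ and $\sum_k(H_k^{(2)}/k)T_k''(a)$ modulo $p$, expand the factorial ratios by partial fractions, and collapse the resulting full-length sums into $H_{p-1}^{(r)}$, finally invoking \eqref{Hp1r} to turn these into the Bernoulli term $B_{p-5}$. For \eqref{1.2} the tail runs only to $(p-1)/2$, so \eqref{Hp12r} is used instead, and this is precisely the source of the extra correction $\sum_{k=1}^a(2H_{2k}-H_k)/k^4$ in $\cK(a,t)$.

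The main obstacle is this tail analysis: evaluating the weighted sums $\sum_{a<k\le p-1}(H_k^{(2)}/k)T_k'(a)$ modulo $p^2$ and the $T_k''$-sum modulo $p$ in closed form, and matching them to the precise constants $-\tfrac25B_{p-5}$ and $4B_{p-5}$ demanded by the statement. This step entangles the doubly-singular terms $k\ge p-a$ (where a second factor of $p$ enters) with the reduction to $H_{p-1}^{(r)}$, and will require a handful of auxiliary congruences for weighted harmonic sums. For \eqref{1.1} with $a>(p-1)/2$ one option is to first use the symmetry $\alpha\leftrightarrow-1-\alpha$, i.e.\ $(a,t)\mapsto(p-1-a,-1-t)$, under which the full sum is invariant, so as to reduce to the cleaner range $a\le(p-1)/2$.
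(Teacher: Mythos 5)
Your framework is sound as far as it goes: $F(\alpha)$ is a polynomial with $p$-integral coefficients, so the truncation $F(\alpha)\equiv F(a)+pt\,F'(a)+\frac{1}{2}(pt)^2F''(a)\pmod{p^3}$ is legitimate; the exact evaluation $F(a)=-2H_a^{(3)}$ is correct (and provable by the induction you indicate); the product formula for $\binom{\alpha}{k}\binom{-1-\alpha}{k}$ and your closed form for $T_k'(a)$ both check out; and your coefficient targets $F'(a)\equiv 6H_a^{(4)}+p\bigl(2H_a^{(5)}-\tfrac{2}{5}B_{p-5}\bigr)\pmod{p^2}$ and $F''(a)\equiv 4B_{p-5}-20H_a^{(5)}\pmod{p}$ are exactly what \eqref{1.1} demands. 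But the proof stops where the theorem actually lives. You never evaluate the tail sums $\sum_{a<k\le p-1}(H_k^{(2)}/k)\,T_k'(a)$ modulo $p^2$, the corresponding $T_k''$-sum modulo $p$, or the $k\le a$ contributions; the ``handful of auxiliary congruences'' that would produce the constants $-\tfrac{2}{5}$, $4$, $6$, $-20$, and for \eqref{1.2} the constant $-\tfrac{31}{5}$ together with the extra term $2p^2t\sum_{k=1}^{a}(2H_{2k}-H_k)/k^4$, \emph{is} the content of the theorem, and it is left unproven. These tails are $a$-shifted weighted double harmonic sums (note that $T_k''(a)=2T_k'(a)\cdot(\text{a logarithmic-derivative harmonic sum depending on both }a\text{ and }k)$), and modulo $p^2$ you must additionally disentangle the terms with $k\ge p-a$, where $(k+1)\cdots(k+a)$ acquires a factor of $p$ --- you flag this yourself as the main obstacle. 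Nothing in the proposal shows that the $a$-dependence of these sums collapses into $H_a^{(4)}$, $H_a^{(5)}$ plus a Bernoulli constant independent of $a$; such shifted sums modulo $p^2$ are not covered by the standard congruences you invoke, so as written this is a program, not a proof.

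It is worth seeing how the paper sidesteps all of this: it never differentiates. Lemma \ref{Lem2.2} is an exact telescoping identity for $\sum_{k\le n}\binom{x}{k}\binom{-x}{k}H_k^{(2)}$, which yields $S_n(\alpha)-S_n(\alpha-1)$ in closed form; iterating the shift $a$ times reduces the problem to (i) the boundary products $\binom{pt+k-1}{p-1}\binom{-pt-k-1}{p-1}$ and their half-range analogues (Lemma \ref{Lem2.3}), and (ii) the base values $S_{p-1}(pt)$ and $S_{(p-1)/2}(pt)$, which expand into the multiple harmonic sums $H_{p-1}(2,2)$, $H_{p-1}(2,3)$, $H_{p-1}^{(4)}$, $H_{p-1}^{(5)}$ and their half-range versions, all evaluated by the known congruences \eqref{Hp1r}, \eqref{Hp12r}, \eqref{Hp1rs}, \eqref{Hp12rs}. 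The dependence on $a$ is thus handled \emph{exactly} by the identity rather than asymptotically by Taylor coefficients, and every Bernoulli constant comes from standard full- or half-range sums with no shift by $a$. If you want to complete your approach, the most economical repair is to replace the derivative computation by this descent in $\alpha$: your exact identity $F(a)=-2H_a^{(3)}$ is the integer-step case of that telescoping lemma, and the same lemma at general $\alpha$ already encodes the information you were trying to extract from $F'(a)$ and $F''(a)$.
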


Similarly, in the left sides of  \eqref{1.1} and \eqref{1.2} we can replace the denominators $k$ by $\alpha+k$  and obtain some congruences modulo $p^{4}$.
\begin{theorem}\label{Th1.3} Suppose that $p>5$ is a prime, $\alpha\in\mathbb{Z}_p$ and $p\nmid \alpha$. Let $a=\langle \alpha\rangle_p$ and $t:=(\alpha-a)/p$. Then
\begin{align}\label{1.6}
&\sum_{k=1}^{p-1}\frac{H_k^{(2)}}{\alpha+k}\cdot\binom{\alpha}k\binom{-1-\alpha}k\notag\\
\equiv&-\frac1{\alpha^3}+\frac{p^2t(t+1)}{\alpha\cdot a^2}\left(\frac1{\alpha^2}+\frac{2pH_a}{a^2}-\frac{p(2t+1)}{\alpha^2\cdot a}+\frac23pB_{p-3}\right)\pmod{p^4}.
\end{align}
And if $a\leq (p-1)/2$, then
\begin{align}\label{1.7}
&\sum_{k=1}^{\frac{p-1}2}\frac{H_k^{(2)}}{\alpha+k}\cdot\binom{\alpha}k\binom{-1-\alpha}k\notag\\
&\equiv-\frac1{\alpha^3}+\frac{pt}{\alpha\cdot a}\left(\frac1{\alpha^2}+\frac73pB_{p-3}-\frac{pt}{\alpha^2\cdot a}+\frac{2p}{a^2}\sum_{k=1}^{a}\frac1{2k-1}\right)\pmod{p^3}.
\end{align}
\end{theorem}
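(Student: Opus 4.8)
The plan is to isolate an exact combinatorial identity that accounts for the main term $-1/\alpha^3$, and to treat everything else perturbatively in $t=(\alpha-a)/p$. The identity I would establish first is that, for every positive integer $n$,
\[
\sum_{k=1}^{n}\frac{H_k^{(2)}}{n+k}\binom{n}{k}\binom{-1-n}{k}=-\frac1{n^3},
\]
equivalently $\sum_{k=1}^{n}(-1)^k\frac{H_k^{(2)}}{n+k}\binom{n}{k}\binom{n+k}{k}=-1/n^3$, since $\binom{-1-n}{k}=(-1)^k\binom{n+k}{k}$. I would prove this by creative telescoping (Zeilberger) or by induction on $n$ using the contiguous relations for $\binom{n}{k}\binom{n+k}{k}$; it is the source of the leading term in both \eqref{1.6} and \eqref{1.7}.

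Next I would split the left side of \eqref{1.6} as a head $\sum_{k=1}^{a}$ plus a tail $\sum_{k=a+1}^{p-1}$ (and similarly for \eqref{1.7}, with upper limit $(p-1)/2$). For the head I would write $\alpha=a+pt$ and expand each factor through
\[
\binom{\alpha}{k}=\binom{a}{k}\prod_{j=0}^{k-1}\Bigl(1+\frac{pt}{a-j}\Bigr),\qquad \frac1{\alpha+k}=\frac1{a+k}\cdot\frac1{1+pt/(a+k)},
\]
and the analogous product for $\binom{-1-\alpha}{k}$. Because $1\le k\le a$, under the hypothesis $a\le(p-1)/2$ of \eqref{1.7} all of the factors $a-j$, $a+k$, $a+1+j$ are $p$-adic units, so the expansion is clean: its $t^0$ coefficient equals exactly $-1/a^3$ by the identity above, while the $t^1$ coefficient (resp. the $t^1,t^2,t^3$ coefficients for \eqref{1.6}), after carrying out the $k$-summation, produce the harmonic pieces $H_a$ and $\sum_{k=1}^a 1/(2k-1)$ together with the values reducible to $B_{p-3}$ via \eqref{Hp1r} and \eqref{Hp12r}.

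For the tail, the factor $\alpha-a=pt$ occurring inside $\binom{\alpha}{k}$ for every $k>a$ shows each tail term is divisible by $p$, so the tail is of order $p$; applying the elementary identity
\[
\frac1{\alpha+k}\binom{\alpha}{k}\binom{-1-\alpha}{k}=-\frac{\alpha-k+1}{k^2}\binom{\alpha}{k-1}\binom{-1-\alpha}{k-1}
\]
(which simultaneously cancels the apparent pole at $k=p-a$, where $\alpha+k\equiv0\pmod p$) converts the tail into sums of the symmetric object $\binom{\alpha}{j}\binom{-1-\alpha}{j}$ weighted by $1/k^m$ and by $H_\cdot^{(2)}/k^m$. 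These I would evaluate by writing each as a full sum over $0\le j\le p-2$ minus a head sum, invoking \eqref{1.1} and \eqref{1.2} of Theorem \ref{Th1.1} for the $H_j^{(2)}$-weighted pieces and \eqref{Hp1r}, \eqref{Hp12r}, \eqref{ZHSun} for the remaining elementary harmonic and Bernoulli sums. The decisive point is a cancellation: the order-$p$ part of the tail must cancel the order-$p$ discrepancy between $-1/a^3$ and the target $-1/\alpha^3=-1/a^3+3pt/a^4-\cdots$, leaving the stated order-$p^2$ correction.

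The main obstacle is attaining the modulus $p^4$ in \eqref{1.6}. There the range $p-a\le k\le p-1$ contributes at $p$-adic order $2$, because a second factor $p(t+1)$ enters through $\binom{-1-\alpha}{k}$, and the would-be pole at $k=p-a$ must be reconciled with it at the same time; keeping the head expansion to order $t^3$ and the tail to its second nonvanishing order, and matching the two, is precisely what generates the factor $t(t+1)$ and the term $2pH_a/a^2$ in \eqref{1.6}. For \eqref{1.7} the hypothesis $a\le(p-1)/2$ keeps the entire range $k\le(p-1)/2$ in the regime of $p$-adic order at most $1$, so modulus $p^3$ suffices and the simpler right-hand side results.
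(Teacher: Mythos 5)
Your opening identity is correct, and in fact it is exactly the $x=n$ case of what the paper proves as Lemma \ref{Lem2.2}; but you stopped at integer $n$, and this is where your plan goes astray. The missing idea is that no perturbation in $t$ is needed at all: since $\binom{-\alpha}{k}=\frac{\alpha}{\alpha+k}\binom{-\alpha-1}{k}$, Lemma \ref{Lem2.2} applied with $x=\alpha$ gives the \emph{exact} closed form
\begin{equation*}
\sum_{k=1}^{n}\frac{H_k^{(2)}}{\alpha+k}\binom{\alpha}{k}\binom{-1-\alpha}{k}
=-\frac1{\alpha^3}+\frac1{\alpha}\binom{\alpha-1}{n}\binom{-\alpha-1}{n}\left(\frac1{\alpha^2}+H_n^{(2)}\right)
\end{equation*}
for every $n$ and every $\alpha\in\Z_p$ with $p\nmid\alpha$. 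Taking $n=p-1$ and $n=(p-1)/2$, the whole theorem collapses to evaluating the single boundary product $\binom{\alpha-1}{n}\binom{-\alpha-1}{n}$, which Lemma \ref{Lem2.3} with $k=a$ (note $\alpha-1=pt+a-1$) already expands modulo $p^4$, resp.\ $p^3$, together with $H_{p-1}^{(2)}\equiv\frac23pB_{p-3}$ and $H_{(p-1)/2}^{(2)}\equiv\frac73pB_{p-3}\pmod{p^2}$ from \eqref{Hp1r} and \eqref{Hp12r} with $r=2$. This is the paper's entire proof; the head/tail split, the pole at $k=p-a$ (which the exact identity absorbs automatically), and all auxiliary sums simply never arise.

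As written, your scheme has a genuine gap that you cannot close with the tools you cite. After your (correct) transformation $\frac1{\alpha+k}\binom{\alpha}{k}\binom{-1-\alpha}{k}=-\frac{\alpha-k+1}{k^2}\binom{\alpha}{k-1}\binom{-1-\alpha}{k-1}$, the tail becomes sums of $\binom{\alpha}{j}\binom{-1-\alpha}{j}$ against weights such as $(\alpha-j)/(j+1)^2$ and $(\alpha-j)H_j^{(2)}/(j+1)^2$; these do not match the weight $1/k$ of Theorem \ref{Th1.1}, and for \eqref{1.6} you would need them modulo $p^4$, while \eqref{1.1} is only modulo $p^3$ and Sun's Euler-polynomial evaluation only modulo $p^2$. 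Supplying those congruences is at least as hard as the theorem itself, and the ``decisive cancellation'' on which your argument hinges is asserted rather than demonstrated. There is also a structural misattribution: the head $\sum_{k\le a}$ is a finite sum of rational functions of $a$ alone, so its $t$-coefficients cannot produce $B_{p-3}$ via \eqref{Hp1r} or \eqref{Hp12r}; the Bernoulli contributions can only enter through the full-range quantities $H_{p-1}^{(2)}$ and $H_{(p-1)/2}^{(2)}$, i.e.\ through what, in the exact identity above, is the boundary term.
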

In particular, substituting $\alpha=-1/2$ in Theorem \ref{Th1.3}, we get
\begin{corollary}
$$
\sum_{k=1}^{p-1}\frac{H_k^{(2)}}{(2k-1)16^k}\cdot\binom{2k}k^2\equiv4+p^2\left(4+8p-16pq_p(2)+\frac23pB_{p-3}\right)\pmod{p^4},
$$
$$
\sum_{k=1}^{\frac{p-1}2}\frac{H_k^{(2)}}{(2k-1)16^k}\cdot`\binom{2k}k^2\equiv4-p\left(4+8pq_p(2)+\frac73pB_{p-3}\right)\pmod{p^3}.
$$
\end{corollary}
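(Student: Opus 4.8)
The plan is to derive the Corollary directly from Theorem \ref{Th1.3} by the specialization $\alpha=-1/2$, combined with the standard evaluations recorded in \eqref{Hp12r}. First I would record the two elementary identities that convert the general binomial sums of Theorem \ref{Th1.3} into the central-binomial sums of the Corollary. Since $\binom{2k}{k}=(-4)^k\binom{-1/2}{k}$, taking $\alpha=-1/2$ gives $-1-\alpha=-1/2$ and hence
$$
\binom{\alpha}{k}\binom{-1-\alpha}{k}=\binom{-1/2}{k}^2=\frac{1}{16^k}\binom{2k}{k}^2,
$$
while $\alpha+k=(2k-1)/2$ yields $1/(\alpha+k)=2/(2k-1)$. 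Thus each left-hand side of \eqref{1.6} and \eqref{1.7} equals exactly twice the corresponding sum in the Corollary, so it suffices to evaluate the right-hand sides at $\alpha=-1/2$ and divide by $2$.

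Next I would compute the relevant local data. Since $-1/2\equiv(p-1)/2\pmod p$, we have $a=\langle\alpha\rangle_p=(p-1)/2$ and therefore $t=(\alpha-a)/p=-1/2$; in particular $a\le(p-1)/2$, so both parts of Theorem \ref{Th1.3} apply. The value $t=-1/2$ is the source of the simplifications: in \eqref{1.6} the factor $2t+1$ vanishes, so the term $-p(2t+1)/(\alpha^2 a)$ drops out, while the surviving coefficients collapse via $\alpha^3=-1/8$, $\alpha^2=1/4$, and $t(t+1)=-1/4$. Substituting these into \eqref{1.6} leaves
$$
8+\frac{p^2}{2a^2}\Big(4+\frac{2pH_a}{a^2}+\frac23pB_{p-3}\Big),
$$
and halving this gives the first claim once the powers of $1/a$ are expanded.

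The only genuine computation is this $p$-adic expansion together with two known congruences. From $a=(p-1)/2$ one has $1/a=2/(p-1)=-2\sum_{n\ge0}p^n$ and $1/a^2=4\sum_{n\ge0}(n+1)p^n$, which I would truncate to the precision dictated by the explicit powers of $p$ multiplying them (mod $p^2$ against the $p^2$-factors, mod $p$ against the $p^3$-factors). For the coefficients that persist I would invoke \eqref{Hp12r} in the shape $H_{(p-1)/2}\equiv-2q_p(2)\pmod p$, and for \eqref{1.7} the auxiliary evaluation
$$
\sum_{k=1}^{(p-1)/2}\frac1{2k-1}=H_{p-1}-\tfrac12H_{(p-1)/2}\equiv q_p(2)\pmod p,
$$
where $H_{p-1}\equiv0\pmod p$. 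Repeating the substitution for \eqref{1.7} with $t=-1/2$ (so that $pt/(\alpha a)=p/a$, $-pt/(\alpha^2 a)=2p/a$, and $1/\alpha^2=4$) and collecting terms modulo $p^3$ then produces the second claim.

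The derivation involves no conceptual obstacle beyond Theorem \ref{Th1.3} itself; the only point requiring care is tracking the $p$-adic order, namely how many terms of the expansions of $1/a$, $1/a^2$, $1/a^3$ must be retained so that each product is evaluated to the correct modulus. Keeping exactly the terms through $p^3$ in \eqref{1.6} and through $p^2$ in \eqref{1.7}, the coefficients $4$, $8p$, $-16pq_p(2)$, $\tfrac23pB_{p-3}$ (respectively $4$, $8pq_p(2)$, $\tfrac73pB_{p-3}$) emerge directly.
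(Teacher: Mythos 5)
Your proposal is correct and follows exactly the paper's route: the paper obtains the Corollary by simply substituting $\alpha=-1/2$ (so $a=(p-1)/2$, $t=-1/2$) into Theorem \ref{Th1.3}, and your worked-out details — the identities $\binom{-1/2}{k}^2=\binom{2k}{k}^2/16^k$ and $1/(\alpha+k)=2/(2k-1)$, the vanishing of the $2t+1$ term, the expansions of $1/a$, $1/a^2$ to the required $p$-adic precision, and the evaluations $H_{(p-1)/2}\equiv-2q_p(2)$ and $\sum_{k=1}^{(p-1)/2}\frac{1}{2k-1}\equiv q_p(2)\pmod p$ — are precisely the routine verifications the paper leaves implicit, and they check out.
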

At the end of the section, let us introduce the notion of alternating multiple harmonic sum,
which will be used in our proofs of Theorems \eqref{Th1.1}-\eqref{Th1.3}. For non-zero integers $r_1,\ldots,r_m$, define
$$
H_n^{(r_1,\ldots,r_m)}:=\sum_{\substack{1\leq k_1<k_2<\ldots<k_m\leq n}}\prod_{i=1}^m\frac{\sgn(r_i)^{k_i}}{k_i^{|r_i|}},
$$
where $\sgn(r)$ denotes the sign of $r$. For example, 
$$
H_n^{(-1)}=\sum_{k=1}^n\frac{(-1)^k}{k},\qquad
H_n^{(1,-2)}=\sum_{k=2}^n\frac{(-1)^k}{k^2}\sum_{j=1}^{k-1}\frac{1}{j}.
$$
For the properties and applications of alternating multiple harmonic sums, the reader may refer to \cite{H, HHT, Tjnt, TZ}.

We are going to give the proof of Theorem \ref{Th1.1} in Section 2. Then in Section 3, with the  help of 
Theorem \ref{Th1.1} and several additional lemmas, we shall confirm Sun's conjectures (\ref{1.3a}) and (\ref{1.3}).
Finally, Section 4 is devoted to proving Theorem \ref{Th1.3}.

\section{Proof of Theorem \ref{Th1.1}}
\setcounter{lemma}{0}
\setcounter{theorem}{0}
\setcounter{corollary}{0}
\setcounter{remark}{0}
\setcounter{equation}{0}
\setcounter{conjecture}{0}
\begin{lemma}\label{Lem2.2} For any positive integer $n$, we have
\begin{equation}\label{id}
\sum_{k=1}^n\binom{x}{k}\binom{-x}kH_k^{(2)}=-\frac1{x^2}+\binom{x-1}{n}\binom{-x-1}n\left(\frac{1}{x^2}+H_n^{(2)}\right).
\end{equation}
\end{lemma}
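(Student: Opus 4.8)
The plan is to prove \eqref{id} by induction on $n$, which really amounts to recognizing the right-hand side as a telescoping sum. Write the summand as $u_k:=\binom{x}{k}\binom{-x}{k}H_k^{(2)}$ and set
$$
G_n:=\binom{x-1}{n}\binom{-x-1}{n}\left(\frac{1}{x^2}+H_n^{(2)}\right),
$$
so that $G_0=1/x^2$ and the claimed identity becomes $\sum_{k=1}^n u_k=G_n-G_0$. Hence it suffices to verify the one-step relation $G_k-G_{k-1}=u_k$ for every $k\geq1$, after which the summation is immediate; the base case $n=1$ can also be checked directly, both sides equalling $-x^2$.

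For the one-step relation I would use the elementary factorizations
$$
\binom{x}{k}=\frac{x}{k}\binom{x-1}{k-1},\qquad \binom{-x}{k}=\frac{-x}{k}\binom{-x-1}{k-1},
$$
together with their shifted analogues $\binom{x-1}{k}=\frac{x-k}{k}\binom{x-1}{k-1}$ and $\binom{-x-1}{k}=\frac{-x-k}{k}\binom{-x-1}{k-1}$. Writing $D_{k-1}:=\binom{x-1}{k-1}\binom{-x-1}{k-1}$ and using $(x-k)(-x-k)=k^2-x^2$, these yield $\binom{x}{k}\binom{-x}{k}=-\frac{x^2}{k^2}D_{k-1}$ and $\binom{x-1}{k}\binom{-x-1}{k}=\frac{k^2-x^2}{k^2}D_{k-1}$. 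Substituting the second expression into $G_k$ and using $H_k^{(2)}=H_{k-1}^{(2)}+1/k^2$, a short expansion of $G_k-G_{k-1}$ collapses to $-\frac{x^2}{k^2}D_{k-1}H_k^{(2)}$, which is exactly $u_k$ by the first expression.

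There is no serious obstacle here, since the identity is purely formal in the indeterminate $x$. The only real work is the algebraic bookkeeping in simplifying $G_k-G_{k-1}$: one must track how the constant term $1/x^2$ and the newly added term $1/k^2$ recombine so that the surviving factor is precisely $-x^2/k^2$ multiplying $H_k^{(2)}$ (and not $H_{k-1}^{(2)}$). Once this telescoping relation is in place, summing from $k=1$ to $n$ gives $G_n-G_0$, which is the asserted right-hand side.
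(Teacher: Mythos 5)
Your proof is correct and is essentially the same as the paper's: both arguments verify the identity by telescoping, showing that the one-step difference of the right-hand side equals the summand $\binom{x}{k}\binom{-x}{k}H_k^{(2)}$ via the same binomial factorizations (the paper normalizes them relative to $\binom{x}{n}\binom{-x}{n}$ rather than your $D_{k-1}=\binom{x-1}{k-1}\binom{-x-1}{k-1}$, a purely cosmetic difference). Your observation that $G_0=1/x^2$ lets the telescoping start at $k=1$ directly, which even spares you the paper's explicit base-case computation $f(1)=g(1)=-x^2$.
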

\begin{proof} Let $f(n)$ and $g(n)$ denote the left-hand side and the right-hand side of the identity. It is easy to check that
\begin{align*}
f(n)-f(n-1)&=\sum_{k=1}^n\binom{x}{k}\binom{-x}kH_k^{(2)}-\sum_{k=1}^n\binom{x}{k}\binom{-x}kH_k^{(2)}\\
=&\binom{x}n\binom{-x}nH_n^{(2)}.
\end{align*}
And by noting that
$$\binom{x-1}n\binom{-x-1}n=\binom{x}n\binom{-x}n\cdot\frac{x^2-n^2}{x^2}$$
and
$$\binom{x-1}{n-1}\binom{-x-1}{n-1}=-\frac{n^2}{x^2}\cdot\binom{x}n\binom{-x}n,$$
we have
\begin{align*}
&g(n)-g(n-1)\\
=&\binom{x}{n}\binom{-x}n\cdot\frac{x^2-n^2}{x^2}\left(\frac1{x^2}+H_n^{(2)}\right)+\binom{x}{n}\binom{-x}{n}\cdot\frac{n^2}{x^2}\left(\frac1{x^2}+H_{n-1}^{(2)}\right)\\
=&\binom{x}{n}\binom{-x}nH_n^{(2)}.
\end{align*}
Hence
$$
f(n)-f(n-1)=g(n)-g(n-1),
$$
and $f(1)=g(1)=-x^2$. so by induction we can get $f(n)=g(n)$ for all $n\geq 1.$
\end{proof}
\begin{lemma}\label{Lem2.3} Let $p>3$ be an odd prime and let $t\in\mathbb{Z}_p$. If $1\leq k\leq p-1$, then
\begin{align*}
&\binom{pt+k-1}{p-1}\binom{-pt-k-1}{p-1}\\
\equiv&\frac{p^2t(t+1)}{k^2}\left(1+2pH_k-\frac{p}k-\frac{2pt}k\right)\pmod{p^4}.
\end{align*}
If $1\leq k\leq (p-1)/2$, then
\begin{align*}
\binom{pt+k-1}{\frac{p-1}2}\binom{-pt-k-1}{\frac{p-1}2}\equiv\frac{pt}{k}\left(1-\frac{pt}k+2pH_{2k}-pH_k\right)\pmod{p^3}.
\end{align*}
\end{lemma}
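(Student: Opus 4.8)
The plan is to reduce both products to a single product of squares, extract the factors that are divisible by $p$, and expand what survives. First I would use $\binom{pt+k-1}{N}=\prod_{j=1}^N(pt+k-j)/N!$ together with the reflection $\binom{-pt-k-1}{N}=(-1)^N\prod_{j=1}^N(pt+k+j)/N!$ (valid since $p$ is odd), so that multiplying gives a numerator $(\pm1)\prod_{j=1}^N\big((pt+k)^2-j^2\big)$, with $N=p-1$ in the first case and $N=(p-1)/2$ in the second. Writing $u=pt+k\equiv k\pmod p$, the factor $u^2-j^2=(u-j)(u+j)$ is divisible by $p$ exactly when $j\equiv\pm k\pmod p$. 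For $N=p-1$ both $j=k$ (giving $u-k=pt$) and $j=p-k$ (giving $u+(p-k)=p(t+1)$) lie in range, producing the prefactor $p^2t(t+1)$; for $N=(p-1)/2$ only $j=k$ occurs, producing the prefactor $pt$. This already explains the shape of the two right-hand sides.

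Next, in the surviving product I would factor each term as $pt+k\mp j=(k\mp j)\big(1+pt/(k\mp j)\big)$, where $k\mp j$ is a $p$-adic unit, splitting the product into an integer ratio times $\prod\big(1+pt/(k\mp j)\big)$. Because the prefactor already carries $p^2$ (respectively $p$), the remaining product is only needed modulo $p^2$, so the last factor contributes just $1+pt\sum 1/(k\mp j)\pmod{p^2}$, with the reciprocal sums needed only modulo $p$. The two arithmetic inputs are then (a) the integer ratio modulo $p^2$, and (b) those reciprocal sums modulo $p$.

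For (a), in the case $N=p-1$ the ratio equals $\frac{1}{k^2}$ times $\frac{\prod_{i=1}^{k-1}(p+i)}{\prod_{i=1}^{k}(p-i)}$, which modulo $p^2$ is $1+p(H_{k-1}+H_k)=1+p\big(2H_k-\tfrac1k\big)$. In the case $N=(p-1)/2$ the ratio reduces, after cancelling factorials, to $\frac1k\prod_{i=1}^{k}\frac{m+i}{m-i+1}$ with $m=(p-1)/2$, and the pairing $\frac{m+i}{m-i+1}=\frac{p+(2i-1)}{p-(2i-1)}\equiv-\big(1+\frac{2p}{2i-1}\big)\pmod{p^2}$ yields $\frac1k\big(1+2p\sum_{i=1}^{k}\frac1{2i-1}\big)=\frac1k\big(1+p(2H_{2k}-H_k)\big)$. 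For (b) I would evaluate the reciprocal sums by reflection: the identities $H_{p-1-k}\equiv H_k\pmod p$ and $H_{(p-1)/2+k}\equiv H_{(p-1)/2-k}\pmod p$ (the latter because pairing $j$ with $p-j$ makes $\tfrac1j+\tfrac1{p-j}\equiv0$) collapse each sum $\sum 1/(k\mp j)$ to $-\tfrac1k\pmod p$, using $H_{k-1}-H_k=-\tfrac1k$. Substituting (a) and (b) and multiplying out modulo $p^4$ (respectively $p^3$) produces the two stated congruences.

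I expect the main obstacle to be the $p$-adic bookkeeping rather than any single identity: one must track precisely which quantities are needed modulo $p$ versus modulo $p^2$ so that every cross term between the integer ratio and the $\prod(1+pt/(k\mp j))$ factor is retained or discarded correctly, and the reflection computations of step (b)—though elementary—have to be carried out consistently for both the $k-j$ and the $k+j$ sums and in both the full and half ranges.
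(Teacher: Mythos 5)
Your argument is correct and, at its computational core, the same as the paper's: both proofs rest on extracting the $p$-divisible factors ($pt$ and $p(t+1)$ in the full range, only $pt$ in the half range), first-order expansions of the remaining unit factors modulo $p^2$, and the reflection $H_{p-1-k}\equiv H_k\pmod p$. The organization differs in two places, one of which is a genuine improvement. First, you multiply the two binomial coefficients into the symmetric product $\prod_{j=1}^{N}\bigl((pt+k)^2-j^2\bigr)/(N!)^2$ (with $N=p-1$ or $(p-1)/2$) before localizing the $p$-factors, whereas the paper expands each binomial coefficient separately, e.g. $\binom{pt+k-1}{p-1}\equiv\frac{pt}{k}\bigl(1+pH_k-\frac{pt}{k}\bigr)\pmod{p^3}$, and multiplies afterwards; this is essentially cosmetic, though your version makes it transparent why $j=p-k$ contributes the factor $p(t+1)$ in the full range but nothing in the half range. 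Second, and more substantively, in the half-range case the paper evaluates the unit $\frac{(-1)^k\binom{(p-1)/2+k}{k}}{k\binom{(p-1)/2}{k}}$ by invoking two known central-binomial congruences, namely $(-1)^k\binom{(p-1)/2+k}{k}\binom{(p-1)/2}{k}\equiv\binom{2k}{k}^2/16^k\pmod{p^2}$ and $\binom{(p-1)/2}{k}^2\equiv\frac{\binom{2k}{k}^2}{16^k}\bigl(1-p(2H_{2k}-H_k)\bigr)\pmod{p^2}$; your pairing $\frac{m+i}{m-i+1}=\frac{p+(2i-1)}{p-(2i-1)}\equiv-\bigl(1+\frac{2p}{2i-1}\bigr)\pmod{p^2}$, with $m=(p-1)/2$, gets $\frac1k\bigl(1+p(2H_{2k}-H_k)\bigr)$ directly and keeps the proof self-contained, which is arguably cleaner than the paper's route. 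Two bookkeeping slips, neither fatal: the ratio $\prod_{i=1}^{k-1}(p+i)\big/\prod_{i=1}^{k}(p-i)$ is $\frac{(-1)^k}{k}\bigl(1+p(H_{k-1}+H_k)\bigr)$ rather than $1+p(H_{k-1}+H_k)$ --- the stray $\frac{(-1)^k}{k}$ cancels against the suppressed prefactor $\frac{(-1)^{p-1-k}}{k}$, so your final $\frac1{k^2}\bigl(1+p(2H_k-\frac1k)\bigr)$ is right; and in the half range it is the combined sum $\bigl(H_{k-1}-H_{(p-1)/2-k}\bigr)+\bigl(H_{(p-1)/2+k}-H_k\bigr)$ that collapses to $-\frac1k$, not each piece separately --- but since you cite exactly the needed identity $H_{(p-1)/2+k}\equiv H_{(p-1)/2-k}\pmod p$, the intent and the endpoint are unambiguous and correct.
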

\begin{proof} It is easy to check that
\begin{align*}
&\binom{pt+k-1}{p-1}=\frac{(pt+k-1)\cdots(pt+1)pt(pt-1)\cdots(pt+k-p+1)}{(p-1)!}\\
\equiv&\frac{pt(k-1)!(1+ptH_{k-1})(-1)^{p-1-k}(p-1-k)!(1-ptH_{p-1-k})}{(p-1)!}\\
\equiv&\frac{pt}k\left(1+pH_k-\frac{pt}k\right)\pmod{p^3}.
\end{align*}
And by (\ref{Hp1r}), we have
\begin{align*}
&\binom{-pt-k-1}{p-1}\\
=&\frac{(pt+k+1)\cdots(pt+p-1)p(t+1)(pt+p+1)\cdots(pt+p+k-1)}{(p-1)!}\\
\equiv&\frac{p(t+1)(p-1)!(1+pt(H_{p-1}-H_k))(k-1)!(1+p(t+1)H_{k-1})}{k!(p-1)!}\\
\equiv&\frac{p(t+1)}k\left(1+pH_{k-1}-\frac{pt}k\right)\pmod{p^3}.
\end{align*}
Hence
$$
\binom{pt+k-1}{p-1}\binom{-pt-k-1}{p-1}\equiv\frac{p^2t(t+1)}{k^2}\left(1+2pH_k-\frac{p}k-\frac{2pt}k\right)\pmod{p^4}.
$$
Similarly,
\begin{align*}
&\binom{pt+k-1}{\frac{p-1}2}=\frac{(pt+k-1)\cdots(pt+1)pt(pt-1)\cdots(pt+k-\frac{p-1}2)}{(\frac{p-1}2)!}\\
\equiv&\frac{pt(k-1)!(1+ptH_{k-1})(-1)^{\frac{p-1}2-k}(\frac{p-1}2-k)!(1-ptH_{\frac{p-1}2-k})}{(\frac{p-1}2)!}\\
\equiv&\frac{pt}{k\binom{\frac{p-1}2}k}(-1)^{\frac{p-1}2-k}\left(1+ptH_{k-1}-ptH_{\frac{p-1}2-k}\right)\pmod{p^3}
\end{align*}
and
\begin{align*}
&\binom{-pt-k-1}{\frac{p-1}2}=\frac{(-1)^{\frac{p-1}2}(pt+k+1)\cdots(pt+k+\frac{p-1}2)}{(\frac{p-1}2)!}\\
\equiv&(-1)^{\frac{p-1}2}\binom{\frac{p-1}2+k}{k}\left(1+ptH_{\frac{p-1}2+k}-ptH_k\right)\pmod{p^2}.
\end{align*}
So
\begin{align*}
&\binom{pt+k-1}{\frac{p-1}2}\binom{-pt-k-1}{\frac{p-1}2}\\
\equiv&\frac{pt(-1)^k\binom{\frac{p-1}2+k}{k}}{k\binom{\frac{p-1}2}k}\left(1-\frac{pt}k+ptH_{\frac{p-1}2+k}-ptH_{\frac{p-1}2-k}\right)\pmod{p^3}.
\end{align*}
It is easy to see that $$(-1)^k\binom{\frac{p-1}2+k}{k}\binom{\frac{p-1}2}k\equiv\frac{\binom{2k}k^2}{16^k}\pmod{p^2}$$ and
$$
\binom{\frac{p-1}2}k^2\equiv\frac{\binom{2k}k^2}{16^k}(1-p(2H_{2k}-H_k))\pmod{p^2}.
$$
Thus,
\begin{align*}
\frac{(-1)^k\binom{\frac{p-1}2+k}{k}}{k\binom{\frac{p-1}2}k}=\frac{(-1)^k\binom{\frac{p-1}2+k}{k}\binom{\frac{p-1}2}k}{k\binom{\frac{p-1}2}k^2}\equiv1+p(2H_{2k}-H_k)\pmod{p^2}.
\end{align*}
Therefore by the fact that $H_{p-1-k}\equiv H_k\pmod p$ for each $0\leq k\leq p-1$, we have
$$
\binom{pt+k-1}{\frac{p-1}2}\binom{-pt-k-1}{\frac{p-1}2}\equiv\frac{pt}k\left(1-\frac{pt}k+p(2H_{2k}-H_k)\right)\pmod{p^3}.
$$
The proof of Lemma \ref{Lem2.3} is complete.
\end{proof}
\begin{lemma}
Suppose that $r,s$ are positive integers and $r+s$ is odd. For any prime $p>r+s+1$,
\begin{equation}\label{Hp1rs}
H_{p-1}(r,s)\equiv\frac{(-1)^sB_{p-r-s}}{r+s}\cdot\binom{r+s}{r}\pmod{p},
\end{equation}
and
\begin{equation}\label{Hp12rs}
H_{\frac{p-1}{2}}(r,s)\equiv\frac{B_{p-r-s}}{2(r+s)}\cdot\left((-1)^s\binom{r+s}{r}+2^{r+s}-2\right)\pmod{p}.
\end{equation}
\end{lemma}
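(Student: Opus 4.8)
The plan is to reduce both congruences to the single-variable harmonic-sum evaluations \eqref{Hp1r} and \eqref{Hp12r}, together with the elementary fact that for $c\ge 1$ one has $\sum_{j=1}^{p-1}j^c\equiv-1\pmod p$ when $(p-1)\mid c$ and $\sum_{j=1}^{p-1}j^c\equiv0\pmod p$ otherwise. Throughout I write $H_n(r,s):=\sum_{1\le i<j\le n}i^{-r}j^{-s}$ and $M:=(p-1)/2$, and I use the stuffle relation $H_n^{(r)}H_n^{(s)}=H_n(r,s)+H_n(s,r)+H_n^{(r+s)}$.

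For \eqref{Hp1rs} I would first pass from reciprocals to powers: since $i^{-r}\equiv i^{p-1-r}$ and $j^{-s}\equiv j^{p-1-s}\pmod p$ for $1\le i,j\le p-1$, putting $a:=p-1-r$ and $b:=p-1-s$ gives
$$H_{p-1}(r,s)\equiv\sum_{j=1}^{p-1}j^b\sum_{i=1}^{j-1}i^a\pmod p.$$
Expanding the inner sum by Faulhaber's formula $\sum_{i=0}^{j-1}i^a=\frac1{a+1}\sum_{\ell=0}^a\binom{a+1}{\ell}B_\ell\,j^{a+1-\ell}$ and switching the order of summation rewrites the right-hand side as $\frac1{a+1}\sum_{\ell=0}^a\binom{a+1}{\ell}B_\ell\sum_{j=1}^{p-1}j^{a+1-\ell+b}$. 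The crux is a short congruence count: using $r+s\le p-2$, there is exactly one index $\ell$ in $[0,a]$ with $(p-1)\mid(a+1-\ell+b)$, namely $\ell_0=p-r-s$, for which $a+1-\ell_0+b=p-1$. Thus only one Bernoulli term survives and
$$H_{p-1}(r,s)\equiv-\frac1{p-r}\binom{p-r}{s}B_{p-r-s}\pmod p.$$
Reducing $\binom{p-r}{s}$ modulo $p$ and simplifying $-\frac1{p-r}\binom{p-r}{s}\equiv\frac{(-1)^s}{r+s}\binom{r+s}{r}$ then yields \eqref{Hp1rs}; note that the parity of $r+s$ plays no role here.

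For \eqref{Hp12rs} I would use the reflection $k\mapsto p-k$ to relate the half-range sum to the full-range sum just computed. Splitting the pairs $1\le i<j\le p-1$ according to whether each of $i,j$ lies in $\{1,\dots,M\}$ or in $\{M+1,\dots,p-1\}$ (the block with $i$ large and $j$ small is empty, since then $i>j$), substituting $i=p-i'$, $j=p-j'$ in the ``both large'' block and $j=p-j'$ in the mixed block, one finds
$$H_{p-1}(r,s)\equiv H_M(r,s)-H_M(s,r)+(-1)^sH_M^{(r)}H_M^{(s)}\pmod p,$$
where $r+s$ being odd is exactly what turns the factor $(-1)^{r+s}$ from the reflection into $-1$ and collapses the ``both large'' block to $-H_M(s,r)$, while the mixed block contributes $(-1)^sH_M^{(r)}H_M^{(s)}$. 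Eliminating $H_M(s,r)$ via the stuffle relation gives $H_{p-1}(r,s)\equiv2H_M(r,s)+[(-1)^s-1]H_M^{(r)}H_M^{(s)}+H_M^{(r+s)}$. Since $r+s$ is odd, one of $r,s$ is even, so by the even-order case of \eqref{Hp12r} the corresponding factor $H_M^{(r)}$ or $H_M^{(s)}$ is $\equiv0\pmod p$ and the product term disappears. Substituting \eqref{Hp1rs} and the odd-order case $H_M^{(r+s)}\equiv-\frac{2^{r+s}-2}{r+s}B_{p-r-s}$ of \eqref{Hp12r}, and solving the resulting linear relation for $H_M(r,s)$, produces \eqref{Hp12rs}.

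I expect the reflection bookkeeping in the second part to be the main obstacle: one must track carefully which of $r,s$ is attached to the smaller index in each block and follow the signs through the substitution $k=p-k'$, since it is easy to swap $H_M(r,s)$ and $H_M(s,r)$ or mislay a factor of $(-1)^{r+s}$. A secondary technical point is the boundary case $p=r+s+2$, where \eqref{Hp12r} is invoked for the order $r+s=p-2$, which sits just outside its stated range $p>r+s+2$; there one checks directly that $H_M^{(p-2)}\equiv\sum_{k=1}^M k\equiv-\frac18\pmod p$ matches the formula, so the argument still goes through. The remaining manipulations are routine reductions modulo $p$.
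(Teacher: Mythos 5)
Your proof is correct; I checked the key steps. The exponent count in the Faulhaber expansion is right: with $a=p-1-r$, $b=p-1-s$ the exponent $a+1-\ell+b=2p-1-r-s-\ell$ runs over $[p-s,\,2p-1-r-s]$, which contains exactly one multiple of $p-1$ (namely $p-1$, at $\ell_0=p-r-s$, using $r+s\le p-2$), all $B_\ell$ with $0\le\ell\le a$ are $p$-integral by von Staudt--Clausen, and the reduction $-\frac{1}{p-r}\binom{p-r}{s}\equiv\frac{(-1)^s}{r+s}\binom{r+s}{r}\pmod p$ is correct. The three-block reflection identity, the stuffle elimination of $H_M(s,r)$, and the vanishing of the product term (because the even-order member of $\{r,s\}$ makes its factor $\equiv0\pmod p$ by \eqref{Hp12r}, and when $s$ is even the coefficient $(-1)^s-1$ vanishes anyway) all hold, and I verified the resulting formulas numerically at $p=7$, $(r,s)=(1,2)$. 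However, you should know that the paper does not prove this lemma at all: its entire proof is a citation, stating that \eqref{Hp1rs} was proved by Hoffman \cite{H} and that \eqref{Hp12rs} follows from \cite[Lemma 1]{HHT}. So your argument is a genuinely different (self-contained) route, though in spirit it reconstructs the standard machinery behind those references: Hoffman's evaluation of full-range double harmonic sums also rests on power-sum orthogonality, and the reflection-plus-stuffle reduction of half-range sums to full-range ones is exactly the mechanism underlying the Hessami Pilehrood--Tauraso lemma. What your version buys is completeness and precision about hypotheses: in particular, your direct check of the boundary case $p=r+s+2$ (where the stated range $p>r+s+2$ of \eqref{Hp12r} for the order $r+s$ does not literally apply, yet $H_{(p-1)/2}^{(p-2)}\equiv-\frac18\pmod p$ matches the formula) is a genuine fine point that the paper's one-line citation glosses over, since the lemma as stated only assumes $p>r+s+1$. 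What the paper's approach buys is, of course, brevity. One cosmetic remark: your observation that \eqref{Hp1rs} holds regardless of the parity of $r+s$ is also correct (for $r+s$ even both sides vanish mod $p$, the right side because $B_{p-r-s}$ is then an odd-index Bernoulli number with index at least $3$), though the hypothesis that $r+s$ is odd is genuinely needed in the reflection step for \eqref{Hp12rs}.
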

\begin{proof}
(\ref{Hp1rs}) was proved in \cite{H} and (\ref{Hp12rs}) follows from \cite[Lemma 1]{HHT}.
\end{proof}

Define
\begin{equation}
S_n(x):=\sum_{k=1}^n\frac{H_k^{(2)}}k\cdot\binom{x}k\binom{-1-x}k.
\end{equation}
 \begin{lemma}\label{Lem2.4a} Suppose that $p>5$ is prime and $\alpha\in\Z_p$. Let $a=\langle \alpha\rangle_p$ and $t=(\alpha-a)/p$. Then
\begin{align}\label{p-1S}
S_{p-1}(\alpha)-S_{p-1}(\alpha-a)\equiv2p^2t(t+1)H_a^{(5)}-2\sum_{k=1}^{a}\frac{1}{(pt+k)^3}\pmod{p^3},
\end{align}
and
\begin{align}\label{p-12S}
&S_{\frac{p-1}2}(\alpha)-S_{\frac{p-1}2}(\alpha-a)\notag\\
\equiv&2ptH_{a}^{(4)}-8p^2t^2H_a^{(5)}+2p^2t\sum_{k=1}^{a}\frac{2H_{2k}-H_k}{k^4}-2\sum_{k=1}^{a}\frac{1}{(pt+k)^3}\pmod{p^3}.
\end{align}
 \end{lemma}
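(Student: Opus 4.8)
The plan is to reduce everything to one exact ``shift by one'' identity in the parameter and then telescope from $\alpha-a=pt$ up to $\alpha=pt+a$. Write $c_k(x):=\binom{x}{k}\binom{-1-x}{k}$, so that $S_n(x)=\sum_{k=1}^n\frac{H_k^{(2)}}{k}c_k(x)$. From $\binom{x+1}{k}/\binom{x}{k}=\frac{x+1}{x-k+1}$ and $\binom{-2-x}{k}/\binom{-1-x}{k}=\frac{x+k+1}{x+1}$ one gets the single-step relation
\begin{equation*}
c_k(x+1)=\frac{x+k+1}{x-k+1}\,c_k(x),\qquad\text{so}\qquad c_k(x+1)-c_k(x)=\frac{2k}{x-k+1}\,c_k(x).
\end{equation*}
Hence $S_n(x+1)-S_n(x)=2\sum_{k=1}^n\frac{H_k^{(2)}c_k(x)}{x-k+1}$. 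Using the symmetry $c_k(x)=c_k(-1-x)$ and $x-k+1=-\big((-1-x)+k\big)$, this is $S_n(x+1)-S_n(x)=-2R_n(-1-x)$, where $R_n(y):=\sum_{k=1}^n\frac{H_k^{(2)}c_k(y)}{y+k}$.

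Next I would read off a closed form for $R_n$ from Lemma \ref{Lem2.2}. Since $\binom{y}{k}\binom{-y}{k}=\frac{y}{y+k}c_k(y)$, the left-hand side of \eqref{id} is $y\,R_n(y)$, whence
\begin{equation*}
R_n(y)=\frac1y\left(-\frac1{y^2}+\binom{y-1}{n}\binom{-y-1}{n}\Big(\frac1{y^2}+H_n^{(2)}\Big)\right).
\end{equation*}
Telescoping over $x=pt,\dots,pt+a-1$ and reindexing by $k=x-pt+1$ (so $-1-x=-pt-k$) gives $S_n(\alpha)-S_n(pt)=-2\sum_{k=1}^{a}R_n(-pt-k)$. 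The virtue of the substitution $y=-pt-k$ is that $-1/y=1/(pt+k)$, $y^2=(pt+k)^2$, and $\binom{y-1}{n}\binom{-y-1}{n}=\binom{pt+k-1}{n}\binom{-pt-k-1}{n}$ is precisely the product estimated in Lemma \ref{Lem2.3}.

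For \eqref{p-1S} take $n=p-1$. The $-1/y^3$ summand of $R_n$ contributes, after multiplication by $-2$, the main term $-2\sum_{k=1}^a 1/(pt+k)^3$, which I keep unexpanded. Lemma \ref{Lem2.3} shows $\binom{pt+k-1}{p-1}\binom{-pt-k-1}{p-1}$ is $O(p^2)$, so in the remaining piece $\frac{2}{pt+k}\binom{pt+k-1}{p-1}\binom{-pt-k-1}{p-1}\big(\frac1{(pt+k)^2}+H_{p-1}^{(2)}\big)$ I may reduce every other factor to leading order; in particular $H_{p-1}^{(2)}\equiv\tfrac23pB_{p-3}$ is $O(p)$ and drops out modulo $p^3$. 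Summing $\frac{2}{k}\cdot\frac{p^2t(t+1)}{k^2}\cdot\frac1{k^2}$ over $1\le k\le a$ yields $2p^2t(t+1)H_a^{(5)}$, proving \eqref{p-1S}.

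For \eqref{p-12S} the scheme is identical with $n=\frac{p-1}{2}$ and the second estimate of Lemma \ref{Lem2.3}, under which $\binom{pt+k-1}{(p-1)/2}\binom{-pt-k-1}{(p-1)/2}\equiv\frac{pt}{k}\big(1-\frac{pt}{k}+p(2H_{2k}-H_k)\big)$ is now only $O(p)$. Consequently I must expand $\frac{2}{pt+k}$, $\frac1{(pt+k)^2}$ and this factor to relative order $p^2$, and — unlike the full-range case — I must \emph{keep} the contribution of $H_{(p-1)/2}^{(2)}\equiv\frac73pB_{p-3}$, which is no longer annihilated by a spare power of $p$. Collecting the harmonic sums produces the main term $-2\sum_{k=1}^a1/(pt+k)^3$ together with $2ptH_a^{(4)}-8p^2t^2H_a^{(5)}+2p^2t\sum_{k=1}^a\frac{2H_{2k}-H_k}{k^4}$, plus a term proportional to $p^2tB_{p-3}H_a^{(2)}$ issuing from $H_{(p-1)/2}^{(2)}$. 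This half-range bookkeeping — and specifically the correct accounting of the $H_{(p-1)/2}^{(2)}$ contribution and its reconciliation with the stated right-hand side — is the step I expect to be the main obstacle; the full-range case is comparatively clean precisely because there the analogous $H_{p-1}^{(2)}$ term is killed by the extra factor of $p$.
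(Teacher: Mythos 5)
Your route is the same as the paper's: your single-step relation $S_n(x+1)-S_n(x)=-2R_n(-1-x)$, combined with the closed form for $R_n$ read off from Lemma \ref{Lem2.2}, is literally the paper's identity
$S_n(\alpha)-S_n(\alpha-1)=-\frac{2}{\alpha^3}+\frac{2}{\alpha}\binom{\alpha-1}{n}\binom{-\alpha-1}{n}\bigl(\frac{1}{\alpha^2}+H_n^{(2)}\bigr)$,
and your telescoping, reindexing, and use of Lemma \ref{Lem2.3} coincide with the paper's argument. Your treatment of \eqref{p-1S} is complete and correct: there the binomial product is $O(p^2)$ while $H_{p-1}^{(2)}=O(p)$, so that cross term dies modulo $p^3$, exactly as you say.

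For \eqref{p-12S}, the obstacle you flag is genuine, and you should trust your bookkeeping over the printed right-hand side. The $H_{(p-1)/2}^{(2)}$ contribution is
$2\sum_{k=1}^{a}\binom{pt+k-1}{\frac{p-1}{2}}\binom{-pt-k-1}{\frac{p-1}{2}}\frac{H_{(p-1)/2}^{(2)}}{pt+k}\equiv 2pt\,H_{(p-1)/2}^{(2)}H_a^{(2)}\equiv\frac{14}{3}p^2tB_{p-3}H_a^{(2)}\pmod{p^3}$,
and it does not cancel for general $a$. At precisely this step the paper's proof records the term as $2pt(H_{(p-1)/2}^{(2)})^2$ --- i.e.\ it silently replaces $H_a^{(2)}$ by $H_{(p-1)/2}^{(2)}$, which is legitimate only when $a=(p-1)/2$ --- and then discards it as $O(p^3)$. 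Consequently \eqref{p-12S} as stated holds only when $p\mid H_a^{(2)}$ (in particular for $a=(p-1)/2$, the single case the paper uses, via $\alpha=-1/2$, to prove \eqref{1.3a} and \eqref{1.3}); for general $a\leq(p-1)/2$ the right-hand side must carry your extra term, and the same correction propagates into \eqref{1.2} of Theorem \ref{Th1.1}. A direct check confirms this: for $p=11$, $\alpha=12$ (so $a=t=1$) one has $S_5(12)-S_5(11)=-\frac{2}{1728}+\frac{1}{6}\binom{11}{5}\binom{-13}{5}\bigl(\frac{1}{144}+H_5^{(2)}\bigr)$, and this differs from the stated right-hand side of \eqref{p-12S} by $484\equiv 4p^2\pmod{p^3}$, which is exactly $2pt\,H_5^{(2)}H_1^{(2)}$ modulo $p^3$. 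So the reconciliation you were seeking does not exist: your proposal, finished by computing the coefficient $\frac{14}{3}$ of $p^2tB_{p-3}H_a^{(2)}$, proves the corrected form of the lemma, and the discrepancy is an error in the paper (harmless for its application, since $H_{(p-1)/2}^{(2)}\equiv 0\pmod{p}$ there).
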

\begin{proof}
For any $n\geq 1$, by Lemma \ref{Lem2.2}, we have
\begin{align*}
&S_n(\alpha)-S_n(\alpha-1)=\sum_{k=1}^n\frac{H_k^{(2)}}k\left(\binom{\alpha}k\binom{-1-\alpha}k-\binom{\alpha-1}k\binom{-\alpha}k\right)\\
&=\frac2\alpha\sum_{k=1}^n\binom{\alpha}{k}\binom{-\alpha}kH_k^{(2)}=-\frac2{\alpha^3}+\frac2\alpha\binom{\alpha-1}{n}\binom{-\alpha-1}n\left(\frac{1}{\alpha^2}+H_n^{(2)}\right).
\end{align*}
Thus, by Lemma \ref{Lem2.3} and the fact $H_{p-1}^{(2)}\equiv0\pmod p$, we have
\begin{align*}
&S_{p-1}(\alpha)-S_{p-1}(\alpha-a)=\sum_{k=0}^{a-1}(S_{p-1}(\alpha-k)-S_{p-1}(\alpha-k-1))\notag\\
=&2\sum_{k=0}^{a-1}\left(\binom{\alpha-k-1}{p-1}\binom{-\alpha+k-1}{p-1}\left(\frac{1}{(\alpha-k)^3}+\frac{H_{p-1}^{(2)}}{\alpha-k}\right)-\frac{1}{(\alpha-k)^3}\right)\notag\\
\equiv&2\sum_{k=1}^{a}\left(\binom{pt+k-1}{p-1}\binom{-pt-k-1}{p-1}\frac{1}{(pt+k)^3}-\frac{1}{(pt+k)^3}\right)\notag\\
\equiv&2p^2t(t+1)H_a^{(5)}-2\sum_{k=1}^{a}\frac{1}{(pt+k)^3}\pmod{p^3}.
\end{align*}
Similarly, using Lemma \ref{Lem2.3} and the fact $H_{\frac{p-1}{2}}^{(2)}\equiv0\pmod p$, we get
\begin{align*}
&S_{\frac{p-1}2}(\alpha)-S_{\frac{p-1}2}(\alpha-a)=\sum_{k=0}^{a-1}(S_{p-1}(\alpha-k)-S_{p-1}(\alpha-k-1))\notag\\
=&2\sum_{k=0}^{a-1}\left(\binom{\alpha-k-1}{\frac{p-1}2}\binom{-\alpha+k-1}{\frac{p-1}2}\left(\frac{1}{(\alpha-k)^3}+\frac{H_{\frac{p-1}{2}}^{(2)}}{\alpha-k}\right)-\frac{1}{(\alpha-k)^3}\right)\notag\\
=&2\sum_{k=1}^{a}\left(\binom{pt+k-1}{\frac{p-1}2}\binom{-pt-k-1}{\frac{p-1}2}\left(\frac{1}{(pt+k)^3}+\frac{H_{\frac{p-1}{2}}^{(2)}}{pt+k}\right)-\frac{1}{(pt+k)^3}\right)\notag\\
\equiv&2\sum_{k=1}^{a}\frac1{(pt+k)^3}\frac{pt}k\left(1-\frac{pt}k+2pH_{2k}-pH_k\right)+2pt(H_{\frac{p-1}{2}}^{(2)})^2-2\sum_{k=1}^{a}\frac{1}{(pt+k)^3}\notag\\
\equiv&2ptH_{a}^{(4)}-8p^2t^2H_a^{(5)}+2p^2t\sum_{k=1}^{a}\frac{2H_{2k}-H_k}{k^4}-2\sum_{k=1}^{a}\frac{1}{(pt+k)^3}\pmod{p^3}.
\end{align*}

\end{proof}

\begin{lemma}\label{Lem2.4} Suppose that $p>7$ is prime and $\alpha\in\Z_p$. Let $a=\langle \alpha\rangle_p$ and $t=(\alpha-a)/p$. Then
\begin{align*}
S_{p-1}(\alpha-a)\equiv2p^2t^2B_{p-5}-\frac{2}5p^2tB_{p-5}\pmod{p^3},
\end{align*}
\begin{align*}
S_{\frac{p-1}2}(\alpha-a)\equiv4p^2t^2B_{p-5}-\frac{31}5p^2tB_{p-5}\pmod{p^3},
\end{align*}
\end{lemma}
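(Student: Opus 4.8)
The key observation is that the shifted argument equals $\alpha-a=pt$, which is divisible by $p$. The plan is therefore to expand the product $\binom{pt}{k}\binom{-1-pt}{k}$ for $1\le k\le p-1$ to the appropriate $p$-adic precision, substitute into the definition of $S_n(pt)$, and reduce everything to single-index harmonic sums that can be read off from the congruences recorded above. First I would write, for $1\le k\le p-1$,
\[
\binom{pt}{k}=\frac{pt}{k}(-1)^{k-1}\prod_{i=1}^{k-1}\left(1-\frac{pt}{i}\right),\qquad \binom{-1-pt}{k}=(-1)^k\prod_{j=1}^{k}\left(1+\frac{pt}{j}\right).
\]
Multiplying these, the signs cancel, the inner factors pair up as $(1-pt/i)(1+pt/i)=1-p^2t^2/i^2$, and a single factor $1+pt/k$ survives, so that
\[
\binom{pt}{k}\binom{-1-pt}{k}=-\frac{pt}{k}\left(1+\frac{pt}{k}\right)\prod_{i=1}^{k-1}\left(1-\frac{p^2t^2}{i^2}\right).
\]
Because of the overall factor $pt/k$ and since only the value modulo $p^3$ is needed, the remaining product is $\equiv 1\pmod{p^2}$, whence
\[
\binom{pt}{k}\binom{-1-pt}{k}\equiv-\frac{pt}{k}-\frac{p^2t^2}{k^2}\pmod{p^3}.
\]

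Substituting this into $S_n(pt)=\sum_{k=1}^n(H_k^{(2)}/k)\binom{pt}{k}\binom{-1-pt}{k}$, and noting that each $H_k^{(2)}/k\in\Z_p$ for $1\le k\le p-1$, the claim reduces modulo $p^3$ to
\[
S_n(pt)\equiv-pt\sum_{k=1}^n\frac{H_k^{(2)}}{k^2}-p^2t^2\sum_{k=1}^n\frac{H_k^{(2)}}{k^3}\pmod{p^3}.
\]
Because of the prefactors $pt$ and $p^2t^2$, I only need the first sum modulo $p^2$ and the second modulo $p$. To evaluate them I would separate the diagonal from the strictly increasing part, giving the exact identities
\[
\sum_{k=1}^n\frac{H_k^{(2)}}{k^2}=\frac{(H_n^{(2)})^2+H_n^{(4)}}{2},\qquad \sum_{k=1}^n\frac{H_k^{(2)}}{k^3}=H_n^{(2,3)}+H_n^{(5)}.
\]

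For $n=p-1$ one has $H_{p-1}^{(2)}=O(p)$, so its square vanishes modulo $p^2$, and \eqref{Hp1r} gives $\tfrac12 H_{p-1}^{(4)}\equiv\tfrac25 pB_{p-5}$; meanwhile $H_{p-1}^{(5)}\equiv0\pmod p$ and \eqref{Hp1rs} with $(r,s)=(2,3)$ gives $H_{p-1}^{(2,3)}\equiv-2B_{p-5}$. Combining the two contributions yields the first congruence. For $n=(p-1)/2$ the same splitting, now fed by \eqref{Hp12r} and \eqref{Hp12rs}, produces $\sum_{k}H_k^{(2)}/k^2\equiv\tfrac{31}{5}pB_{p-5}$ and $\sum_{k}H_k^{(2)}/k^3\equiv 2B_{p-5}-6B_{p-5}=-4B_{p-5}$, giving the second congruence. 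The hypothesis $p>7$ is exactly what is needed so that \eqref{Hp12r} applies with $r=5$.

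The hard part will be purely bookkeeping: keeping precisely the right $p$-adic precision through the expansion, so that the quadratic correction $-p^2t^2/k^2$ is retained while the genuinely higher-order terms coming from $\prod(1-p^2t^2/i^2)$ are legitimately discarded; and, more error-prone, matching the ordering convention of the double harmonic sum, since it is $H_n^{(2,3)}$ (the smaller index carrying exponent $2$) that the splitting produces, and to which \eqref{Hp1rs} and \eqref{Hp12rs} must be applied with $r=2$, $s=3$.
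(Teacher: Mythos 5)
Your proposal is correct and follows essentially the same route as the paper: both expand $\binom{pt}{k}\binom{-1-pt}{k}\equiv-\frac{pt}{k}\left(1+\frac{pt}{k}\right)\pmod{p^3}$ (you via the exact product identity, the paper via harmonic-number expansions of the factorials), reduce $S_n(pt)$ to $-pt\sum_k H_k^{(2)}/k^2-p^2t^2\sum_k H_k^{(2)}/k^3$, split off the diagonal using $2H_n(2,2)=(H_n^{(2)})^2-H_n^{(4)}$, and evaluate via \eqref{Hp1r}, \eqref{Hp12r}, \eqref{Hp1rs}, \eqref{Hp12rs}. All your numerical evaluations ($-2B_{p-5}$, $2B_{p-5}$, $-6B_{p-5}$, $\tfrac{31}{5}pB_{p-5}$, etc.) match the paper's computation.
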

\begin{proof} It is easy to see that
\begin{align*}
&\binom{pt}k\binom{-1-pt}k=\frac{pt}k\binom{pt-1}{k-1}\binom{-1-pt}k\\
&\equiv\frac{pt}k(-1)^{k-1}(1-ptH_{k-1})(-1)^k(1+ptH_k)\equiv-\frac{pt}k\left(1+\frac{pt}k\right)\pmod{p^3}.
\end{align*}
Thus, using the fact $$2H_{p-1}(2,2)=(H_{p-1}^{(2)})^2-H_{p-1}^{(4)}$$ and $H_{p-1}^{(2)}\equiv0\pmod p$, we have
\begin{align*}
&S_{p-1}(pt)=\sum_{k=1}^{p-1}\frac{H_k^{(2)}}{k}\binom{pt}k\binom{-1-pt}k\equiv-pt\sum_{k=1}^{p-1}\frac{H_k^{(2)}}{k^2}-p^2t^2\sum_{k=1}^{p-1}\frac{H_k^{(2)}}{k^3}\\
&\equiv-pt(H_{p-1}(2,2)+H_{p-1}^{(4)}-p^2t^2(H_{p-1}(2,3)+H_{p-1}^{(5)})\\
&\equiv-\frac{pt}2H_{p-1}^{(4)}-p^2t^2(H_{p-1}(2,3)+H_{p-1}^{(5)})\pmod{p^3}.
\end{align*}
In view of (\ref{Hp1r}) and (\ref{Hp1rs}), we immediately obtain the desired result
$$
S_{p-1}(pt)\equiv2p^2t^2B_{p-5}-\frac{2}5p^2tB_{p-5}\pmod{p^3}.
$$
Similarly,
$$
S_{\frac{p-1}2}(pt)\equiv-\frac{pt}2H_{\frac{p-1}2}^{(4)}-p^2t^2\left(H_{\frac{p-1}2}(2,3)+H_{\frac{p-1}2}^{(5)}\right)\pmod{p^3}.
$$
In view of (\ref{Hp12r}) and (\ref{Hp12rs}), we immediately get that
$$
S_{\frac{p-1}2}(\alpha-a)=S_{\frac{p-1}2}(pt)\equiv4p^2t^2B_{p-5}-\frac{31}5p^2tB_{p-5}\pmod{p^3}.
$$
\end{proof}
Now we are ready to complete the proof of Theorem \ref{Th1.1}.
\begin{proof}[Proof of Theorem \ref{Th1.1}]
By Lemmas \ref{Lem2.4a} and \ref{Lem2.4}, we have
\begin{align*}
&S_{p-1}(\alpha)\equiv S_{p-1}(pt)+2\sum_{k=1}^{a}\frac{-1}{(pt+k)^3}+2p^2t(t+1)H_a^{(5)}\\
\equiv&2p^2t^2B_{p-5}-\frac{2}5p^2tB_{p-5}+2\sum_{k=1}^{a}\frac{-1}{(pt+k)^3}+2p^2t(t+1)H_a^{(5)}\\
\equiv&2p^2t^2B_{p-5}-\frac{2}5p^2tB_{p-5}-2H_a^{(3)}\notag\\
&\ \ \ \ \ \ \ \ \ \ \ +6ptH_a^{(4)}+2p^2t(1-5t)H_a^{(5)}\pmod{p^3}.
\end{align*}
Thus (\ref{1.1}) is concluded. 

Furthermore, by (\ref{p-12S}) and Lemma \ref{Lem2.4}, we have
\begin{align*}
&S_{\frac{p-1}2}(\alpha)\equiv S_{\frac{p-1}2}(pt)+2\sum_{k=1}^{a}\frac{-1}{(pt+k)^3}+2ptH_a^{(4)}-8p^2t^2H_a^{(5)}\\
&\ \ \ \ \ \ \ \ \ \ \ \ \ \ \ \   +2p^2t\sum_{k=1}^{a}\frac{2H_{2k}-H_k}{k^4}\\
&\equiv4p^2t^2B_{p-5}-\frac{31}5p^2tB_{p-5}-2H_a^{(3)}+8ptH_a^{(4)}\notag\\
&\ \ \ \ \ \ \ \ \  -20p^2t^2H_a^{(5)}+2p^2t\sum_{k=1}^{a}\frac{2H_{2k}-H_k}{k^4}\pmod{p^3}.
\end{align*}
This proves (\ref{1.2}).
\end{proof}

\section{Proof of Theorem \ref{SunCT}}
\setcounter{lemma}{0}
\setcounter{theorem}{0}
\setcounter{corollary}{0}
\setcounter{remark}{0}
\setcounter{equation}{0}
\setcounter{conjecture}{0}

In this section, with the help of Theorem \ref{Th1.1}, we shall confirm Sun's conjectures (\ref{1.3a}) and (\ref{1.3}).
\begin{lemma}\label{LemH} For any prime $p>7$, we have
$$
H_{\frac{p-1}2}^{(3)}\equiv6\frac{H_{p-1}}{p^2}-\frac{81}{10}p^2B_{p-5}\pmod {p^3}.
$$
\end{lemma}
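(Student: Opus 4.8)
The plan is to isolate $H_{(p-1)/2}^{(3)}$ by combining three ``reflection'' identities and then reading off Bernoulli values from \eqref{Hp1r} and \eqref{Hp12r}. The starting point is the partial-fraction identity
$$\frac{1}{k^2(p-k)}=\frac{1}{p^2k}+\frac{1}{pk^2}+\frac{1}{p^2(p-k)},$$
which after summing over $1\le k\le (p-1)/2$ (using $\sum_{k=1}^{(p-1)/2}\frac1{p-k}=H_{p-1}-H_{(p-1)/2}$, so that the $H_{(p-1)/2}$ terms cancel) gives the exact identity $\sum_{k=1}^{(p-1)/2}\frac1{k^2(p-k)}=\frac1{p^2}H_{p-1}+\frac1p H_{(p-1)/2}^{(2)}$. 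Expanding the left side $p$-adically as $-\sum_{r\ge3}p^{r-3}H_{(p-1)/2}^{(r)}$ and truncating modulo $p^3$ yields the first relation
$$H_{(p-1)/2}^{(3)}\equiv-\frac1{p^2}H_{p-1}-\frac1p H_{(p-1)/2}^{(2)}-pH_{(p-1)/2}^{(4)}-p^2H_{(p-1)/2}^{(5)}\pmod{p^3}.$$

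This still contains $H_{(p-1)/2}^{(2)}$ to high precision (modulo $p^4$), so the second step produces an independent relation for it. I would use the parity reflection $\sum_{k\ \mathrm{even}}1/k^2=\sum_{k\ \mathrm{odd}}1/(p-k)^2$; because the exponent is even the geometric expansion of $1/(p-k)^2$ reinforces rather than cancels, and writing $O_r:=\sum_{1\le k\le p-2,\ k\ \mathrm{odd}}k^{-r}=H_{p-1}^{(r)}-2^{-r}H_{(p-1)/2}^{(r)}$ one gets modulo $p^4$
$$H_{(p-1)/2}^{(2)}\equiv2H_{p-1}^{(2)}+4pO_3+6p^2O_4+8p^3O_5\pmod{p^4}.$$
Here the $O_3$-term reintroduces $H_{(p-1)/2}^{(3)}$ with coefficient $-\tfrac12 p$, which is the coupling that makes the eventual elimination non-trivial. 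To evaluate $2H_{p-1}^{(2)}$ to the needed precision I would invoke the full-interval reflection $\sum_{k=1}^{p-1}\frac1{k(p-k)}=\frac2p H_{p-1}$, giving $H_{p-1}^{(2)}\equiv-\frac2p H_{p-1}-pH_{p-1}^{(3)}-p^2H_{p-1}^{(4)}\pmod{p^4}$ and trading the order-two full sum for $H_{p-1}$ plus Bernoulli terms.

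With both relations in hand I would substitute the evaluations from \eqref{Hp1r} (for $H_{p-1}^{(3)},H_{p-1}^{(4)},H_{p-1}^{(5)}$) and \eqref{Hp12r} (notably $H_{(p-1)/2}^{(4)}\equiv\frac{62}5 pB_{p-5}$ and $H_{(p-1)/2}^{(5)}\equiv-6B_{p-5}$), collapsing everything to $H_{p-1}$ and $B_{p-5}$. This produces the linear system
$$H_{(p-1)/2}^{(2)}\equiv-\frac4p H_{p-1}-\frac12 pH_{(p-1)/2}^{(3)}-\frac{47}{20}p^3B_{p-5}\pmod{p^4},$$
$$H_{(p-1)/2}^{(3)}\equiv-\frac1{p^2}H_{p-1}-\frac1p H_{(p-1)/2}^{(2)}-\frac{32}5 p^2B_{p-5}\pmod{p^3},$$
and eliminating $H_{(p-1)/2}^{(2)}$ yields $\tfrac12 H_{(p-1)/2}^{(3)}\equiv\frac3{p^2}H_{p-1}-\frac{81}{20}p^2B_{p-5}$, which is the claim.

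The main obstacle is not any single computation but the bookkeeping of $p$-adic precision together with the need to avoid tautologies: both the half-interval and the full-interval reflections applied directly to the odd-order sum $H_{(p-1)/2}^{(3)}$ are self-dual and collapse to $0\equiv0$, so the genuinely new information must be extracted at the even order two via the parity reflection. Consequently one must carry $H_{(p-1)/2}^{(2)}$ (hence $H_{p-1}^{(2)}$, $H_{(p-1)/2}^{(4)}$, $H_{(p-1)/2}^{(5)}$ at the matching orders) to modulo $p^4$ for the final elimination to be valid modulo $p^3$. The hypothesis $p>7$ is exactly what is needed for \eqref{Hp1r} and \eqref{Hp12r} to apply at order five.
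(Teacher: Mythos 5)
Your proof is correct, and it takes a genuinely different route from the paper's. The paper proves this lemma by citation: it takes Z.-H. Sun's evaluation \cite[pp.~17]{s2000} of $\sum_{k=1}^{(p-1)/2}k^{-3}$ modulo $p^3$ in terms of $B_{\varphi(p^3)-2}$ and $B_{\varphi(p^3)-4}$, reduces $B_{\varphi(p^3)-2}/(\varphi(p^3)-2)$ via Kummer-type congruences to the combination $\frac{B_{3p-5}}{3p-5}-3\frac{B_{2p-4}}{2p-4}+3\frac{B_{p-3}}{p-3}$, and then invokes Tauraso's theorem \cite[Theorem 2.1]{Tjnt}, which expresses $H_{p-1}/p^2$ modulo $p^3$ by that same combination. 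You instead work self-contained from the two congruences \eqref{Hp1r} and \eqref{Hp12r} already quoted in the introduction, extracting the extra information from exact reflection identities. I verified your computation in detail: the partial-fraction identity does give the exact relation $\sum_{k=1}^{(p-1)/2}\frac{1}{k^2(p-k)}=\frac{H_{p-1}}{p^2}+\frac{1}{p}H_{(p-1)/2}^{(2)}$, whose $p$-adic expansion yields your first relation; the parity reflection gives $H_{(p-1)/2}^{(2)}\equiv 2H_{p-1}^{(2)}+4pO_3+6p^2O_4+8p^3O_5\pmod{p^4}$ with $O_r=H_{p-1}^{(r)}-2^{-r}H_{(p-1)/2}^{(r)}$; and substituting $H_{p-1}^{(3)}\equiv-\frac{6}{5}p^2B_{p-5}$, $H_{p-1}^{(4)}\equiv\frac{4}{5}pB_{p-5}$, $H_{(p-1)/2}^{(4)}\equiv\frac{62}{5}pB_{p-5}$, $H_{(p-1)/2}^{(5)}\equiv-6B_{p-5}$ together with the full-interval relation for $H_{p-1}^{(2)}$ reproduces exactly your coefficients $-\frac{47}{20}$ and $-\frac{32}{5}$, after which the elimination gives $\frac12 H_{(p-1)/2}^{(3)}\equiv\frac{3H_{p-1}}{p^2}-\frac{81}{20}p^2B_{p-5}\pmod{p^3}$, i.e.\ the lemma (this also cross-checks the paper's route, since both land on $-\frac{81}{10}$). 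Your precision bookkeeping is the delicate point and you handle it correctly: carrying the second relation modulo $p^4$ is precisely what legitimizes the division by $p$, and the discarded tails such as $p^3H_{(p-1)/2}^{(6)}$ and $p^3H_{p-1}^{(5)}$ are $O(p^4)$ for $p>7$, which is also exactly where the hypothesis $p>7$ is used. As for what each approach buys: the paper's argument is shorter given the literature, with \cite{s2000} and \cite{Tjnt} doing all the work and $H_{p-1}/p^2$ entering naturally as the cited Bernoulli combination, whereas yours avoids the $B_{\varphi(p^3)-2}$ machinery and the Kummer reduction entirely, needs nothing beyond \eqref{Hp1r}--\eqref{Hp12r}, and makes structurally visible why $H_{p-1}$ appears with coefficient $6$.
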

\begin{proof} In view of \cite[pp. 17]{s2000}, we have
\begin{align*}
H_{\frac{p-1}2}^{(3)}=\sum_{k=1}^{\frac{p-1}2}\frac1{k^3}\equiv-\frac{93}8p^2B_{\varphi(p^3)-4}+6\frac{B_{\varphi(p^3)-2}}{\varphi(p^3)-2}\pmod{p^3}.
\end{align*}
And by \cite[(1.2)]{s2000}, we have
\begin{align*}
\frac{B_{\varphi(p^3)-2}}{\varphi(p^3)-2}\equiv&\binom{p^2-1}2\frac{B_{3p-5}}{3p-5}-(p^2-1)(p^2-3)\frac{B_{2p-4}}{2p-4}\\
&+\binom{p^2-2}2(1-p^{p-4})\frac{B_{p-3}}{p-3}\pmod{p^3}.
\end{align*}
Then by an easy calculation, we have
$$
\frac{B_{\varphi(p^3)-2}}{\varphi(p^3)-2}\equiv\frac{B_{3p-5}}{3p-5}-3\frac{B_{2p-4}}{2p-4}+3\frac{B_{p-3}}{p-3}\pmod{p^3}.
$$
Since $p>7$, so $p-3>4$, in view of \cite[(5.2)]{s2000}, we have 
$$
B_{\varphi(p^3)-4}\equiv\frac45B_{p-5}\pmod p.
$$
Therefore,
$$
H_{\frac{p-1}2}^{(3)}\equiv6\left(\frac{B_{3p-5}}{3p-5}-3\frac{B_{2p-4}}{2p-4}+3\frac{B_{p-3}}{p-3}\right)-\frac{93}{10}p^2B_{p-5}\pmod {p^3}.
$$
In view of \cite[Theorem 2.1]{Tjnt}, we have
$$
\frac{H_{p-1}}{p^2}\equiv\frac{B_{3p-5}}{3p-5}-3\frac{B_{2p-4}}{2p-4}+3\frac{B_{p-3}}{p-3}-\frac15p^2B_{p-5}\pmod{p^3}.
$$
Thus we immediately get the desired result.
\end{proof}
\begin{lemma}
\noindent(i) Suppose that $r\in\N$ and $p\geq r+2$ is prime. Then
\begin{equation}\label{Hp1mr}
H_{p-1}(-r)\equiv\begin{cases}
-\frac{2(1-2^{p-r})}{r}B_{p-r}\pmod{p},&\text{if }r\text{ is odd},\\
-\frac{r(1-2^{p-r-1})}{r+1}pB_{p-r-1}\pmod{p},&\text{if }r\text{ is even}.
\end{cases}
\end{equation}

\noindent(ii) Suppose that $r,s\in\N$ and $p\geq r+s+2$ is prime. If $r+s$ is odd, then
\begin{equation}\label{Hp1mrs}
H_{p-1}(-r,s)\equiv H_{p-1}(r,-s)\equiv 
\frac{1-2^{p-r-s}}{r+s}B_{p-r-s}\pmod{p}.
\end{equation}
\end{lemma}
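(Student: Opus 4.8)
The plan is to reduce everything to the single-variable congruences \eqref{Hp1r} and \eqref{Hp12r} already recorded, handling the double sums of part (ii) by a stuffle (quasi-shuffle) identity together with the reflection $k\mapsto p-k$. For part (i) I would split $H_{p-1}(-r)=\sum_{k=1}^{p-1}(-1)^k/k^r$ according to the parity of $k$; the even part equals $\sum_{j=1}^{(p-1)/2}(2j)^{-r}=2^{-r}H_{(p-1)/2}^{(r)}$, so that
$$
H_{p-1}(-r)=2^{1-r}H_{(p-1)/2}^{(r)}-H_{p-1}^{(r)}.
$$
When $r>1$ is odd, \eqref{Hp1r} gives $H_{p-1}^{(r)}\equiv0\pmod p$ and \eqref{Hp12r} gives $H_{(p-1)/2}^{(r)}\equiv-(2^r-2)B_{p-r}/r$; since $2^{1-r}(2^r-2)=2-2^{2-r}\equiv2(1-2^{p-r})\pmod p$ by Fermat's little theorem, this is exactly the claimed value. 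The case $r=1$ I would treat separately from $H_{(p-1)/2}\equiv-2q_p(2)$ together with $1-2^{p-1}=-pq_p(2)$ and the von Staudt--Clausen relation $pB_{p-1}\equiv-1\pmod p$. When $r$ is even both summands on the right are divisible by $p$, so $H_{p-1}(-r)\equiv0\pmod p$; substituting the even cases of \eqref{Hp12r} and \eqref{Hp1r} then determines $H_{p-1}(-r)/p$ modulo $p$, which after a Fermat simplification is the displayed multiple of $B_{p-r-1}$.

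For part (ii) the key device is the stuffle relation
$$
H_{p-1}(-r)\,H_{p-1}(s)=H_{p-1}(-r,s)+H_{p-1}(s,-r)+H_{p-1}(-(r+s)),
$$
obtained by splitting the product of the two single sums according to whether the two indices satisfy $i<j$, $i>j$, or $i=j$, together with the companion identity carrying the negative sign on the second factor. Because $r,s\ge1$ and $p\ge r+s+2$, both $H_{p-1}^{(r)}$ and $H_{p-1}^{(s)}$ vanish mod $p$, so the left-hand sides are $\equiv0\pmod p$. I would then apply the reflection $(i,j)\mapsto(p-j,p-i)$ on pairs $i<j$: using $(p-k)^m\equiv(-1)^mk^m$ and $(-1)^{p-k}=-(-1)^k$, and the hypothesis that $r+s$ is odd (so the accumulated sign is $+1$), this yields $H_{p-1}(s,-r)\equiv H_{p-1}(-r,s)$ and $H_{p-1}(-s,r)\equiv H_{p-1}(r,-s)\pmod p$. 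Substituting these back collapses the two stuffle relations to $2H_{p-1}(-r,s)+H_{p-1}(-(r+s))\equiv0$ and $2H_{p-1}(r,-s)+H_{p-1}(-(r+s))\equiv0$ modulo $p$, whence
$$
H_{p-1}(-r,s)\equiv H_{p-1}(r,-s)\equiv-\frac12H_{p-1}(-(r+s))\pmod p.
$$
Since $r+s\ge3$ is odd, part (i) evaluates $H_{p-1}(-(r+s))$ and produces the claimed value $\frac{1-2^{p-r-s}}{r+s}B_{p-r-s}$.

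The step I expect to be the main obstacle is the sign bookkeeping in the reflection: one must check that the parity hypothesis $r+s$ odd is precisely what makes the reflected summand reproduce $H_{p-1}(s,-r)$ rather than its negative, since this is exactly what forces the two double sums to coincide. Everything else---verifying the stuffle decomposition, the vanishings $H_{p-1}^{(r)},H_{p-1}^{(s)}\equiv0\pmod p$, and the elementary simplifications in part (i) via Fermat's little theorem and von Staudt--Clausen---is routine.
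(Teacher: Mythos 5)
Your proof is correct, but it takes a genuinely different route from the paper's: the paper does not argue this lemma at all, it simply cites \cite[Corollary 2.3]{TZ} for \eqref{Hp1mr} and \cite[Theorem 3.1]{TZ} for \eqref{Hp1mrs}. Your argument is self-contained within the paper's toolkit: part (i) reduces to \eqref{Hp1r} and \eqref{Hp12r} via the parity splitting $H_{p-1}(-r)=2^{1-r}H_{(p-1)/2}^{(r)}-H_{p-1}^{(r)}$ (with the $r=1$ case handled by $pB_{p-1}\equiv-1\pmod p$, correctly, since $B_{p-1}$ itself is not $p$-integral but $1-2^{p-1}=-pq_p(2)$ supplies the needed factor of $p$), and part (ii) reduces to part (i) via the two stuffle identities together with the reflection $(i,j)\mapsto(p-j,p-i)$; your sign bookkeeping there is right, since the reflected summand carries the factor $-(-1)^{r+s}$, which equals $+1$ exactly when $r+s$ is odd, giving $H_{p-1}(s,-r)\equiv H_{p-1}(-r,s)$ and $H_{p-1}(-s,r)\equiv H_{p-1}(r,-s)\pmod p$ and hence $H_{p-1}(-r,s)\equiv H_{p-1}(r,-s)\equiv-\tfrac12H_{p-1}(-(r+s))$. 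What your route buys is independence from \cite{TZ}; what the citation buys is brevity and the stronger (mod $p^2$) versions proved there. Two caveats. First, in the even case of (i) your computation actually yields $\frac{r(1-2^{-r})}{r+1}pB_{p-r-1}$, which is the \emph{negative} of the displayed right-hand side if both are lifted to modulo $p^2$ (e.g.\ for $r=2$, $p=7$ one has $H_6(-2)\equiv35\pmod{49}$, matching your sign); since the lemma is stated only modulo $p$ and $B_{p-r-1}$ is $p$-integral, both sides are $\equiv0\pmod p$ there, so your proof of the stated congruence is valid and the discrepancy merely exposes a spurious sign (or a modulus that should read $p^2$) in the paper's display. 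Second, \eqref{Hp1r} and \eqref{Hp12r} are quoted for primes $p>r+2$, whereas the lemma permits the boundary $p=r+2$ (possible only for $r$ odd, e.g.\ $r=3$, $p=5$, and entering part (ii) through $p=r+s+2$); the congruences you invoke do hold at this boundary, but your write-up should either verify it directly or cite a version of Z.-H. Sun's results stated for $p\geq r+2$.
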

\begin{proof}
(\ref{Hp1mr}) and (\ref{Hp1mrs}) follow from \cite[Corollary 2.3]{TZ} and \cite[Theorem 3.1]{TZ} respectively.
\end{proof}
\begin{lemma}\label{Lem2.5} Let $p>7$ be a prime. Then
$$
\sum_{k=1}^{\frac{p-1}2}\frac{2H_{2k}-H_k}{k^4}\equiv\frac{31}2B_{p-5}\pmod p.
$$
\end{lemma}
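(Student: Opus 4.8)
The plan is to split the sum using $2H_{2k}-H_k$ and treat the two pieces $\sum_{k=1}^{(p-1)/2}H_{2k}/k^4$ and $\sum_{k=1}^{(p-1)/2}H_k/k^4$ separately, reducing each of them modulo $p$ to (alternating) multiple harmonic sums of weight $5$, for which the congruences \eqref{Hp1r}, \eqref{Hp12r}, \eqref{Hp1rs}, \eqref{Hp12rs}, \eqref{Hp1mr} and \eqref{Hp1mrs} are already available. Throughout I would use $2^{p-1}\equiv1\pmod p$ to replace $2^{p-5}$ by $1/16$.

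For the second piece, writing $H_k=H_{k-1}+1/k$ and collecting terms by the larger index gives
$$
\sum_{k=1}^{\frac{p-1}2}\frac{H_k}{k^4}=H_{\frac{p-1}2}(1,4)+H_{\frac{p-1}2}^{(5)}.
$$
Then \eqref{Hp12rs} with $(r,s)=(1,4)$ and \eqref{Hp12r} with $r=5$ evaluate the two terms modulo $p$, yielding $\frac72B_{p-5}-6B_{p-5}=-\frac52B_{p-5}$. For the first piece the key move is to rescale: since $1/k^4=16/(2k)^4$, substituting $m=2k$ turns it into $16\sum_{m\ \mathrm{even}}H_m/m^4$, where $m$ runs over the even numbers up to $p-1$. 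The even indices are isolated by noting that $(1+(-1)^m)/2$ equals $1$ for even $m$ and $0$ for odd $m$, so this piece becomes a combination of the non-alternating sum $\sum_{m=1}^{p-1}H_m/m^4=H_{p-1}(1,4)+H_{p-1}^{(5)}$ and the alternating sum $\sum_{m=1}^{p-1}(-1)^mH_m/m^4=H_{p-1}(1,-4)+H_{p-1}(-5)$. Here \eqref{Hp1rs} and \eqref{Hp1r} dispose of the former (giving $B_{p-5}$, since $H_{p-1}^{(5)}\equiv0\pmod p$), while the alternating congruences \eqref{Hp1mrs} and \eqref{Hp1mr} dispose of the latter (giving $-\frac3{16}B_{p-5}$). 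Assembling, the first piece equals $16\cdot\frac12\bigl(B_{p-5}-\frac3{16}B_{p-5}\bigr)=\frac{13}2B_{p-5}$ modulo $p$.

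The main obstacle is precisely this first piece: the mismatch between the index $k$ appearing in $k^{-4}$ and the index $2k$ appearing inside $H_{2k}$ forces the passage to even indices, and it is this even/odd splitting that pulls \emph{alternating} multiple harmonic sums into the computation, where one must track the factors of $2^{p-5}\equiv1/16$ carefully. Once both evaluations are in hand, combining them gives
$$
\sum_{k=1}^{\frac{p-1}2}\frac{2H_{2k}-H_k}{k^4}\equiv 2\cdot\frac{13}2B_{p-5}-\Bigl(-\frac52B_{p-5}\Bigr)=\frac{31}2B_{p-5}\pmod p,
$$
which is the claim.
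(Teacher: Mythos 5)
Your proof is correct and follows essentially the same route as the paper: the paper likewise writes $\sum_{k=1}^{(p-1)/2}H_{2k}/k^4=8\sum_{k=1}^{p-1}(1+(-1)^k)H_k/k^4=8\bigl(H_{p-1}(1,4)+H_{p-1}^{(5)}+H_{p-1}(1,-4)+H_{p-1}(-5)\bigr)$ via the same even-index extraction, evaluates this to $\frac{13}{2}B_{p-5}$ using \eqref{Hp1r}, \eqref{Hp1rs}, \eqref{Hp1mr}, \eqref{Hp1mrs}, and handles $\sum_{k=1}^{(p-1)/2}H_k/k^4=H_{\frac{p-1}{2}}(1,4)+H_{\frac{p-1}{2}}^{(5)}\equiv-\frac{5}{2}B_{p-5}$ exactly as you do. Your intermediate values ($\frac{3}{16}B_{p-5}$, $-\frac{3}{8}B_{p-5}$, and the final assembly $2\cdot\frac{13}{2}+\frac{5}{2}=\frac{31}{2}$) all check out.
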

\begin{proof} It is easy to see that
\begin{align*}
&\sum_{k=1}^{\frac{p-1}2}\frac{H_{2k}}{k^4}=8\sum_{k=1}^{p-1}\frac{(1+(-1)^k)H_k}{k^4}\\
=&8(H_{p-1}(1,4)+H_{p-1}^{(5)}+H_{p-1}(1,-4)+H_{p-1}(-5)).
\end{align*}
In view of (\ref{Hp1r}), (\ref{Hp1rs}), (\ref{Hp1mr}) and (\ref{Hp1mrs}), we have
$$
\sum_{k=1}^{\frac{p-1}2}\frac{H_{2k}}{k^4}\equiv\frac{13}2B_{p-5}\pmod p.
$$
Similarly,
$$
\sum_{k=1}^{\frac{p-1}2}\frac{H_{k}}{k^4}=H_{\frac{p-1}{2}}(1,4)+H_{\frac{p-1}2}^{(5)}\equiv-\frac52B_{p-5}\pmod p.
$$
Hence
$$
\sum_{k=1}^{\frac{p-1}2}\frac{2H_{2k}-H_k}{k^4}\equiv\frac{31}2B_{p-5}\pmod p.
$$
This completes the proof of Lemma \ref{Lem2.5}.
\end{proof}

Now we are ready to prove (\ref{1.3a}) and (\ref{1.3}).
For any prime $p>7$, applying (\ref{Hp1r}) and (\ref{1.1}) with $\alpha=-1/2$, $a=(p-1)/2$ and $t=-1/2$, we obtain that
\begin{align}\label{1.3z}
\sum_{k=1}^{p-1}\frac{H_k^{(2)}}{k16^k}\cdot\binom{2k}k^2=S_{p-1}\bigg(-\frac12\bigg)\equiv-2H_{\frac{p-1}2}^{(3)}-\frac{31}2p^2B_{p-5}\pmod{p^3}.
\end{align}
In view of (\ref{1.3z}) and Lemma \ref{LemH},  we immediately obtain that
$$
\sum_{k=1}^{p-1}\frac{H_k^{(2)}}{k16^k}\cdot\binom{2k}k^2\equiv-12\frac{H_{p-1}}{p^2}+\frac7{10}p^2B_{p-5}\pmod{p^3},
$$ 
i.e., (\ref{1.3a}) is valid when $p>7$.
Of course, (\ref{1.3a}) can be verified easily  for $p=5,7$.

Let us turn to (\ref{1.3}). We can check (\ref{1.3}) directly when $p=5,7$. Suppose that $p>7$. Combining (\ref{1.1}) and (\ref{1.2}), we have
\begin{align}\label{Sp1p12alpha}
&S_{p-1}(\alpha)-S_{\frac{p-1}2}(\alpha)\equiv -2p^2t^2B_{p-5}+\frac{29}5p^2tB_{p-5}+2p^2t(t+1)H_a^{(5)}\notag\\
& -2ptH_a^{(4)}+8p^2t^2H_a^{(5)}-2p^2t\sum_{k=1}^{a}\frac{2H_{2k}-H_k}{k^4}\pmod{p^3}.
\end{align}
Applying (\ref{Hp12r}) and (\ref{Sp1p12alpha}) with $\alpha=-1/2$, $a=(p-1)/2$ and $t=-1/2$, we have
$$
\sum_{k=\frac{p+1}2}^{p-1}\frac{\binom{2k}k^2}{k16^k}H_k^{(2)}=S_{p-1}\left(-\frac12\right)-S_{\frac{p-1}2}\left(-\frac12\right)\equiv\frac{31}2p^2B_{p-5}\pmod{p^3},
$$
The proof of Theorem \ref{SunCT} is complete.\qed

\section{Proof of Theorem \ref{Th1.3}}
\setcounter{lemma}{0}
\setcounter{theorem}{0}
\setcounter{corollary}{0}
\setcounter{remark}{0}
\setcounter{equation}{0}
\setcounter{conjecture}{0}
\noindent {\it Proof of Theorem \ref{Th1.3}}. It is easy to see that
$$
\binom{-\alpha}k=\frac{\alpha}{\alpha+k}\binom{-\alpha-1}k.
$$
So by Lemma \ref{Lem2.2}, we have
\begin{align*}
&\sum_{k=1}^{p-1}\binom{\alpha}k\binom{-\alpha-1}k\frac{H_k^{(2)}}{\alpha+k}=\frac1\alpha\sum_{k=1}^{p-1}\binom{\alpha}k\binom{-\alpha}kH_k^{(2)}\\
&=-\frac1{\alpha^3}+\frac1{\alpha}\binom{\alpha-1}{p-1}\binom{-\alpha-1}{p-1}\left(\frac1{\alpha^2}+H_{p-1}^{(2)}\right).
\end{align*}
We know that $a=\langle a\rangle_p+pt$, so set $k=\langle a\rangle_p$ in Lemma \ref{Lem2.3} and by (i), we have
\begin{align*}
&\sum_{k=1}^{p-1}\binom{\alpha}k\binom{-\alpha-1}k\frac{H_k^{(2)}}{\alpha+k}\\
&\equiv-\frac1{\alpha^3}+\frac{p^2t(t+1)}{\alpha\langle a\rangle_p^2}\bigg(\frac1{\alpha^2}+\frac{2pH_{\langle a\rangle_p}}{\alpha^2}-\frac{p(2t+1)}{\alpha^2\langle a\rangle_p}+\frac23pB_{p-3}\bigg)\pmod{p^4}.
\end{align*}
Similarly, by Lemma \ref{Lem2.2}, Lemma \ref{Lem2.3} and (ii), we have
\begin{align*}
&\sum_{k=1}^{\frac{p-1}2}\binom{\alpha}k\binom{-\alpha-1}k\frac{H_k^{(2)}}{\alpha+k}=\frac1a\sum_{k=1}^{\frac{p-1}2}\binom{\alpha}k\binom{-\alpha}kH_k^{(2)}\\
&=-\frac1{\alpha^3}+\frac1{\alpha}\binom{\alpha-1}{\frac{p-1}2}\binom{-\alpha-1}{\frac{p-1}2}\left(\frac1{\alpha^2}+H_{\frac{p-1}2}^{(2)}\right)\\
&\equiv-\frac1{\alpha^3}+\frac{pt}{\alpha\langle a\rangle_p}\left(\frac1{\alpha^2}+\frac73pB_{p-3}-\frac{pt}{\alpha^2\langle a\rangle_p}+\frac{2p}{\alpha^2}\sum_{k=1}^{\langle a\rangle_p}\frac1{2k-1}\right)\pmod{p^3}.
\end{align*}
\noindent Now the proof of Theorem \ref{Th1.3} is finished.\qed

\end{document}